\numberwithin{equation}{section}
\theoremstyle{plain} 
\newtheorem{theorem}{Theorem}[section]
\newtheorem{proposition}[theorem]{Proposition}
\newtheorem{cor}[theorem]{Corollary}
\newtheoremstyle{romanstyle}
  {6pt}{6pt}   
  {\normalfont} 
  {}            
  {\normalfont} 
  {.}           
  {0.5em}       
  {}            
\theoremstyle{romanstyle}
\newtheorem{definition}[theorem]{Definition}
\newtheorem{example}[theorem]{Example}
\newtheorem{remark}[theorem]{Remark}
\def\conjconn#1{\overline{\nabla}^{\substack{\scalebox{0.4}{\phantom{i}}\\#1}}}
\def\partialt{\frac{\partial}{\partial t}}
\title[\scalebox{0.8}{The Second Variational Formula for Statistical Biharmonic Maps}]{The Second Variational Formula for Statistical Biharmonic Maps}
\author{Ryu Ueno}
\subjclass[2020]{53B12, 53C43, 58E20, 53A15} 
\begin{document}

\begin{abstract}
Recently, the statistical bi-energy functional and its first variational formula were introduced by the author and H.~Furuhata.
The Maps satisfying the corresponding Euler--Lagrange equation are called statistical biharmonic maps.
We present the second variational formula for the statistical bi-energy functional and introduce the notion of stability.
If the target statistical manifold is a Hessian manifold, it turns out that the second variational formula can be represented using the Hessian curvature.
We provide examples of statistical biharmonic maps into Hessian manifolds that are significant in Hessian and information geometry.
We also determine the stability of these statistical biharmonic maps, including the improper affine sphere.
\end{abstract}

\maketitle

\section{Introduction}\label{sec1}
~~~ A \emph{statistical manifold} $(M,g,\nabla)$ is just a Riemannian manifold $(M,g)$ endowed with an affine connection $\nabla$ satisfying the Codazzi equation, and the pair $(g,\nabla)$ is called a \emph{statistical structure}.
If $\nabla$ is the Levi-Civita connection of $(M,g)$, then the statistical manifold is called \emph{Riemannian}, hence the simplest statistical manifolds are Riemannian manifolds.
Statistical structures were already known in the beginning of the 20th century from affine differential geometry \cite{NomizuSasaki}, but the geometry of statistical manifolds has been actively studied after 1982, when S. Amari and H. Nagaoka found information geometry by applying this structure to computer science \cite{MR1800071, AmariNagaoka1982}.
In information geometry, the statistical manifolds that appear are basically ones with a flat affine connection.
Such statistical manifolds are called \emph{Hessian manifolds}, and have been mainly studied by H. Shima, H. Furuhata, and T. Kurose \cite{FURUHATA2011S86, HF2013, shima2007geometry}.
Today, not only Hessian manifolds, but also various classes of statistical manifolds are the focus of research.\\

Recently in \cite{FU2025}, the first variational problem in the geometry of statistical manifolds was introduced.
Let $u:(M,g,\nabla^M)\to(N,h,\nabla^N)$ be a smooth map between statistical manifolds.
If $M$ is compact, we define the \emph{statistical bi-energy} by
\begin{equation*}
    E_2(u) = \int_{M} \|\tau(u)\|_h^2\,d\mu_g,
\end{equation*}
where $\tau(u)$ is the tension field of $u$.
The first variational formula of this functional induced \emph{statistical biharmonic maps}.
This formulation was inspired by the bi-energy functional and biharmonic maps in Riemannian geometry, for which the first and second variational formulas were obtained by G.Y. Jiang \cite{JB}.
One important example is the improper affine sphere $f:M^m\to\mathbb{R}^{m+1}$ in equiaffine geometry; it is a statistical biharmonic map $f:(M,g,\nabla)\to(\mathbb{R}^{m+1},g_0,D)$ with the induced statistical structure $(g,\nabla)$ and the Euclidean metric $g_0$ on $\mathbb{R}^{m+1}$.\\

We present the second variational formula of the statistical bi-energy functional and define the notion of stability.
Stability of biharmonic maps in Riemannian geometry is a widely studied topic \cite{VBAS, Sub, AMi, Yol}.
Moreover, since compact manifolds are limited in affine differential geometry and information geometry, we extend the notion of statistical biharmonic maps to non-compact statistical manifolds, as in Riemannian geometry \cite{EL,NHS2}.
Let $u \colon (M,g,\nabla^M) \to (N,h,\nabla^N)$ be a statistical biharmonic map, and $\Omega\subset M$ a relatively compact domain, that is, a connected open set with compact closure.
Denote by $E_{2,\Omega}(u)$ the statistical bi-energy measured on $\Omega$.
A smooth variation $F=\{u_t\}_{t\in(-\epsilon,\epsilon)}$ of $u$ is a smooth map $\mbox{$F:M\times(-\epsilon,\epsilon)\to N$}$ satisfying $u_0=u$, where we set $u_t(x) = F(x,t),\,x\in M$ for $\mbox{$t\in(-\epsilon,\epsilon)$}$.
The variational vector field $V\in\Gamma(u^{-1}TN)$ is defined by $V_x=\left.\frac{\partial}{\partial t}\right|_{t=0}u_t(x),$ $x\in M$.
We only consider variations $\{u_t\}$ of $u$ whose variational vector field has the support contained in $\Omega$.
For any $V\in\Gamma(u^{-1}TN)$ which has compact support in $\Omega$, there exists a smooth variation of $u$ such that the corresponding variational vector field is $V$.
By Theorem \ref{main}, the second derivative of $E_{2,\Omega}(u_t)$ is given by:
\begin{equation*}
    \begin{split}
        \left.\frac{d^2}{dt^2}\right|_{t=0} E_{2,\Omega}(u_t) =& \int_M \Big\|\Delta_u V + \sum_i R^N(V,u_*e_i)u_*e_i - 2K^N_{\tau(u)}V \Big\|_h^2 \, d\mu_g\\
                                            &+ \int_M \langle \mathcal{H}_u(V), V \rangle \, d\mu_g.
    \end{split}
\end{equation*}
Detailed explanations of the other notations will be given later in Section~\ref{s3}, however, $\mathcal{H}_u$ is an operator on $\Gamma(u^{-1}TN)$ determined by the map $u$.
If $(N,h,\nabla^N)$ is a Hessian manifold, $\mathcal{H}_u$ can be expressed in terms of the \textit{Hessian curvature}, which is the sectional curvature for Hessian manifolds.
In particular, if $u$ is a mapping to a Hessian manifold of Hessian curvature $0$, then $\mathcal{H}_u$ simply vanishes.
This gives applications of stability results of statistical biharmonic maps into Hessian manifolds.
Also, note that this second variational formula coincides with the one in \cite{JB}, if the target and source statistical manifolds are both Riemannian.\\

The paper is organized as follows. 
In \S 2, we collect the necessary background on statistical manifolds, including affine differential geometry and Hessian geometry. 
We review the statistical connection Laplacians, the statistical bi-energy, and the Euler--Lagrange equation defining the statistical biharmonic map in \S 3. 
We also give some examples of statistical biharmonic maps in this section.
The second variational formula will be proved in \S 4, and together we introduce the operator $\mathcal{H}_u$ for the map $u$ between statistical manifolds. 
Lastly, in \S 5 we give a condition under which the only weakly stable biharmonic maps are those with vanishing tension field.
We also compute the stability of statistical biharmonic maps given in \S 3, including the improper affine sphere.

\section{Preliminaries}\label{sec2}

\subsection{Statistical manifolds}

Throughout this paper, all objects are assumed to be smooth.
Let $M$ be an orientable manifold  of dimension $m \geq 2$.
Here, we denote by $C^{\infty}(M)$ the set of smooth functions on $M$, and by $\Gamma(E)$ the set of sections of a vector bundle $E$ over $M$.\\

\begin{definition}
    Let $g$ be a Riemannian metric and $\nabla$ a torsion-free affine connection on $M$. 
    The pair $(g,\nabla)$ is called a statistical structure on $M$ if it satisfies the Codazzi equation:
    \begin{equation*}
        (\nabla g)(Y,Z;X)=(\nabla_X g)(Y,Z)=(\nabla_Yg)(X,Z),\quad X, Y, Z \in \Gamma(TM).
    \end{equation*}
    The triplet $(M,g,\nabla)$ is called a \textit{statistical manifold}.\\
\end{definition}

On a Riemannian manifold $(M,g)$, we denote by $\nabla^g$ the Levi-Civita connection. 
Since $\nabla^gg=0$ holds, the triplet $(M,g,\nabla^g)$ is the simplest statistical manifold. 
In this case, the statistical manifold is called \textit{Riemannian}.\\

\newpage
\begin{definition}
Let $(M,g,\nabla)$ be a statistical manifold.

$(1)$ \ 
Define $K=K^{(g,\nabla)} \in \Gamma(TM^{(1,2)})$ by
$$
K(X,Y)= K_XY= \nabla_X Y-\nabla^g_X Y,
\quad X, Y \in \Gamma(TM).   
$$
We call $K$ the \textit{difference tensor} of $(M,g,\nabla)$.
We also denote it by $K^M$ if necessary. \\

$(2)$ \
The vector field $T=T^{(g, \nabla)}=\mathrm{tr}_gK \in \Gamma(TM)$ is called the {\sl Tchebychev vector field} of $(M,g,\nabla)$. 
We also denote it by $T^M$ if necessary.
Here, if $T=0$, then $(M, g, \nabla)$ is said to satisfy the {\sl equiaffine condition}\/ or the {\sl apolarity condition}\/.  \\

$(3)$ \ 
Define an affine connection $\overline{\nabla}$ by
$$
Xg(Y,Z)=g(\nabla_XY,Z)+g(Y, \overline{\nabla}_XZ),
\quad X, Y, Z \in \Gamma(TM),    
$$
and call it the {\sl conjugate connection}\/ of $\nabla$ with respect to $g$. 
Conjugate connections are also called {\sl dual connections}\/. 
It is easy to verify that $(M,g,\overline{\nabla})$ is also a statistical manifold.\\

$(4)$ \ 
Define the \textit{curvature tensor field} $R^\nabla \in \Gamma(TM^{(1,3)})$ 
of $\nabla$ by
$$
R^\nabla (X,Y)Z=\nabla_X \nabla_Y Z-\nabla_Y \nabla_X Z-\nabla_{[X,Y]}Z,
\quad X, Y, Z \in \Gamma(TM). 
$$
On a statistical manifold $(M,g,\nabla)$, we often denote $R^{\nabla}$ by $R$, 
$R^{\nabla^g}$ by $R^g$, and $R^{\overline{\nabla}}$ by $\overline{R}$, for short. \\

\end{definition}


\begin{definition}
    On a statistical manifold $(M,g,\nabla)$, define a tensor field $L=L^{(g,\nabla)} \in\Gamma(TM^{(1,3)})$ by
    $$
    g(L(Z,W)X,Y) = g(R(X,Y)Z,W),\quad X,Y,Z,W\in\Gamma(TM),
    $$
and call it the \textit{curvature interchange} tensor field of $(g,\nabla)$. 
We denote it by $L^M$ if necessary. 
We also denote $L^{(g, \overline{\nabla})}$ by $\bar{L}$.\\
\end{definition}

\begin{remark}
The following formulae hold for $X, Y, Z, W \in \Gamma(TM)$: 
\begin{eqnarray}
&&
\overline{\nabla}_X Y=\nabla^g_X Y-K(X,Y), \label{Kconjg}\\
&&
\nabla^g_X Y=\frac{\nabla_X Y+\overline{\nabla}_X Y}{2},\label{Leviconj}\\
&&
g(\overline{R}(X,Y)Z,W)=-g(Z, R(X,Y)W),\label{intcurvature}\\
&&
L(X,Y)Z = -\bar{L}(Y,X)Z,\label{swap}\\
&&
L(X,Z)Y - L(Y,Z)X = \overline{R}(X,Y)Z.\label{diffcurv}
\end{eqnarray}
\end{remark}

\vspace{0.5\baselineskip}

\begin{definition}
A statistical manifold $(M, g, \nabla)$ is said to be {\sl conjugate symmetric}\/
if the equality $R=L$ holds. \\
\end{definition}

\begin{remark}
The conjugate symmetry of $(M,g,\nabla)$ is equivalent to each of the following conditions:
\renewcommand{\labelenumi}{$(\theenumi)$}
\begin{enumerate}
    \item $R=\overline{R}$
    \item $\nabla^gK$ is symmetric on $M$.\\
\end{enumerate}
\end{remark}


\begin{definition}
    A statistical manifold $(M,g,\nabla)$ is said to be of \textit{constant sectional curvature} $\lambda\in\mathbb{R}$ if it satisfies
    \begin{equation*}
        R(X,Y)Z=\lambda(g(Y,Z)X-g(X,Z)Y),\quad X,Y,Z\in\Gamma(TM).
    \end{equation*}
\end{definition}

\vspace{0.5\baselineskip}

The following proposition is known for statistical manifolds of constant sectional curvature; see \cite{MR4452143}.\\
\begin{proposition}
    Let $(M,g,\nabla)$ be a statistical manifold. 
    The following conditions are mutually equivalent$:$
    \renewcommand{\labelenumi}{$\operatorname{(\theenumi)}$}
    \begin{enumerate}
        \item It has constant sectional curvature.
        \item It is conjugate symmetric and $\nabla$ is projectively flat.\\
    \end{enumerate}
\end{proposition}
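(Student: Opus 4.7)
The plan is to prove the two implications separately, using the structural identities \eqref{Kconjg}--\eqref{diffcurv}, the standard characterization that $\nabla$ is projectively flat iff $R$ has the form $\gamma(Y,Z)X-\gamma(X,Z)Y$ for a symmetric $(0,2)$-tensor $\gamma$ (valid in dimension at least three, where the symmetry of $\gamma$ follows from the first Bianchi identity), and the second Bianchi identity for the torsion-free $\nabla$.

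For $(1)\Rightarrow(2)$, assume $R(X,Y)Z=\lambda(g(Y,Z)X-g(X,Z)Y)$ with $\lambda\in\mathbb{R}$. Projective flatness is immediate with $\gamma=\lambda g$. By the remark it suffices to check $R=\overline R$ for conjugate symmetry; substituting the constant-curvature expression into \eqref{intcurvature} yields
\[
g(\overline R(X,Y)Z,W)=-g(Z,R(X,Y)W)=\lambda\bigl(g(X,W)g(Y,Z)-g(Y,W)g(X,Z)\bigr),
\]
which coincides with $g(R(X,Y)Z,W)$, so $R=\overline R$.

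For $(2)\Rightarrow(1)$, projective flatness produces a symmetric $(0,2)$-tensor $\gamma$ with $R(X,Y)Z=\gamma(Y,Z)X-\gamma(X,Z)Y$. Conjugate symmetry $R=L$ combined with the defining relation $g(L(Z,W)X,Y)=g(R(X,Y)Z,W)$ gives $g(R(Z,W)X,Y)=g(R(X,Y)Z,W)$; expanding both sides with the projective form, using symmetry of $\gamma$ and cancelling the common terms, yields
\[
\gamma(Y,Z)\,g(X,W)=\gamma(X,W)\,g(Y,Z)\qquad\text{for all }X,Y,Z,W.
\]
Specializing $X=W$ and $Y=Z$ shows $\gamma(X,X)/g(X,X)$ is independent of $X$, and polarization gives $\gamma=\lambda g$ for some $\lambda\in C^\infty(M)$; hence $R(X,Y)Z=\lambda(g(Y,Z)X-g(X,Z)Y)$.

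It remains to prove $\lambda$ is constant. A direct expansion of the covariant derivative yields
\[
(\nabla_X R)(Y,Z)W=X(\lambda)\bigl(g(Z,W)Y-g(Y,W)Z\bigr)+\lambda\bigl[(\nabla_X g)(Z,W)Y-(\nabla_X g)(Y,W)Z\bigr].
\]
By the Codazzi equation, $\nabla g$ is totally symmetric as a $(0,3)$-tensor, so the cyclic sum over $(X,Y,Z)$ of the $\lambda$-bracketed terms vanishes identically. The second Bianchi identity then collapses to the purely first-order expression
\[
\bigl[Z(\lambda)g(Y,W)-Y(\lambda)g(Z,W)\bigr]X+\bigl[X(\lambda)g(Z,W)-Z(\lambda)g(X,W)\bigr]Y+\bigl[Y(\lambda)g(X,W)-X(\lambda)g(Y,W)\bigr]Z=0,
\]
and contracting the $X$-slot against the $W$-slot by $g^{-1}$ (using $\sum_i g(V,e_i)e_i=V$ in an orthonormal frame) reduces it to $(m-2)\bigl(Y(\lambda)Z-Z(\lambda)Y\bigr)=0$. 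For $m\geq 3$, linear independence of $Y$ and $Z$ forces $d\lambda\equiv 0$, so $\lambda$ is constant on connected $M$. The delicate step here is this Schur-type extraction of constancy; its success hinges entirely on the Codazzi symmetry of $\nabla g$ to kill the curvature cross-terms in the cyclic sum, and on the dimension bound $m\geq 3$ for the linear-independence argument (the two-dimensional case would require a separate treatment, as in the Riemannian analogue of Schur's theorem).
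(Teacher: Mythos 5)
The paper itself does not prove this proposition; it is quoted from the cited reference \cite{MR4452143}, so your argument has to stand on its own. For $m\geq 3$ the skeleton is sound: in $(1)\Rightarrow(2)$ the verification $R=\overline{R}$ via \eqref{intcurvature} is correct, in $(2)\Rightarrow(1)$ the pair symmetry coming from $R=L$ does force $\gamma=\lambda g$, and your Schur-type step is right — the total symmetry of $\nabla g$ (Codazzi) really does cancel the $\lambda\,\nabla g$ terms in the second Bianchi identity, and the contraction gives $(m-2)\bigl(Y(\lambda)Z-Z(\lambda)Y\bigr)=0$.

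There are, however, two genuine gaps. First, the two-dimensional case is not optional: the proposition is stated for statistical manifolds, the paper works with $m\geq 2$, and its central Hessian examples (e.g.\ the normal-distribution manifold $(\mathbb{H}^2,g^F,\nabla^{(m)})$ of Example~\ref{normdis}) are surfaces, where both of your tools break down — on a surface the two-term form of $R$ no longer characterizes projective flatness, and your contraction is vacuous since $m-2=0$. The repair is to use the genuine $2$-dimensional criterion: $\nabla$ is projectively flat iff the normalized Ricci tensor $\gamma$ satisfies $(\nabla_X\gamma)(Y,Z)=(\nabla_Y\gamma)(X,Z)$; with $\gamma=\lambda g$ this, combined with the Codazzi equation for $g$, gives $X(\lambda)g(Y,Z)=Y(\lambda)g(X,Z)$, hence $d\lambda=0$, and both implications then go through for $m=2$ as well. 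Second, in $(2)\Rightarrow(1)$ the quoted characterization ``projectively flat $\Rightarrow R(X,Y)Z=\gamma(Y,Z)X-\gamma(X,Z)Y$ with $\gamma$ symmetric'' is only valid when the Ricci tensor of $\nabla$ is symmetric: for a projective deformation of a flat connection by a non-closed one-form, $R$ contains an additional term proportional to $Z$ and $\operatorname{Ric}^{\nabla}$ is not symmetric, so the first Bianchi identity does not give symmetry of $\gamma$ but merely ties its skew part to that extra term. In your setting this is rescued by the other hypothesis: conjugate symmetry gives $R=\overline{R}$, hence by \eqref{intcurvature} the $(0,4)$-curvature is skew in its last two slots, which together with skewness in the first two and the first Bianchi identity yields $g(R(X,Y)Z,W)=g(R(Z,W)X,Y)$ and in particular a symmetric Ricci tensor — but this must be established before the decomposition of $R$ is invoked, and your proof as written skips it.
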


    
Statistical manifolds naturally arise in affine differential geometry.
In particular, a statistical structure induced from a centroaffine immersion has constant sectional curvature $\pm1$, while a statistical structure induced from a Blaschke immersion satisfies the equiaffine condition.
Conversely, certain statistical structures can locally be induced from an affine immersion \cite{HM,NomizuSasaki}.

\vspace{0.5\baselineskip}

\begin{theorem}
    Let $(g,\nabla)$ be a statistical structure on a simply connected manifold $M$.
    Then, there exists a locally strongly convex equiaffine immersion $\{f,\xi\}$ that induces $(g,\nabla)$ on $M$ by the Gauss formula if and only if $\overline{\nabla}$ is projectively flat and if the Ricci tensor field of $\nabla$ is symmetric.\\
\end{theorem}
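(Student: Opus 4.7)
The plan is to translate both directions to the Gauss--Codazzi--Ricci integrability equations for an equiaffine immersion, and to invoke the fundamental theorem of affine immersions for the existence direction. For a locally strongly convex equiaffine immersion $\{f,\xi\}$ of $M^m$ into $\mathbb{R}^{m+1}$, with affine fundamental form $g$, induced connection $\nabla$, and affine shape operator $S$, the flat ambient connection $D$ decomposes as $D_Xf_*Y=f_*(\nabla_XY)+g(X,Y)\xi$ and $D_X\xi=-f_*(SX)$. The standard integrability identities then read: Gauss, $R^\nabla(X,Y)Z=g(Y,Z)SX-g(X,Z)SY$; Codazzi for $g$, which is exactly the statistical axiom; Codazzi for $S$, $(\nabla_XS)Y=(\nabla_YS)X$; and the Ricci/equiaffine identity, $g(SX,Y)=g(X,SY)$.

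For the ``only if'' direction, I would start from the Gauss equation and apply \eqref{intcurvature} to obtain, after a short calculation,
\begin{equation*}
\overline{R}(X,Y)Z=g(SY,Z)X-g(SX,Z)Y,
\end{equation*}
which is of the projectively flat form $\rho(Y,Z)X-\rho(X,Z)Y$ with $\rho(X,Y):=g(SX,Y)$. The Ricci identity forces $\rho$ to be symmetric, so $\overline{\nabla}$ is projectively flat. Tracing the Gauss equation in $X$ gives $\mathrm{Ric}^\nabla(Y,Z)=\mathrm{tr}(S)\,g(Y,Z)-g(SY,Z)$, whose symmetry is also equivalent to the self-adjointness of $S$.

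For the ``if'' direction, I would invert this construction. Projective flatness of $\overline{\nabla}$ supplies a symmetric $(0,2)$-tensor $\rho$ with $\overline{R}(X,Y)Z=\rho(Y,Z)X-\rho(X,Z)Y$, and I define $S$ by $g(SX,Y)=\rho(X,Y)$, which is automatically $g$-self-adjoint. Reading \eqref{intcurvature} backwards shows that $\nabla$ satisfies the Gauss equation with this $S$, while the statistical axiom is Codazzi for $g$. The only remaining condition is the Codazzi equation for $S$, and this is the technical heart of the proof: I would derive it by combining the second Bianchi identity for $\overline{\nabla}$, applied to the projectively flat expression of $\overline{R}$, with \eqref{Kconjg}, which transfers covariant derivatives from $\overline{\nabla}$ to $\nabla$ modulo the difference tensor $K$, and then invoke the symmetry of $\mathrm{Ric}^\nabla$ to cancel the antisymmetric terms that remain. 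Once all four integrability conditions hold, the fundamental theorem of equiaffine immersions (see \cite{NomizuSasaki}) produces, on the simply connected $M$, a locally strongly convex equiaffine immersion $\{f,\xi\}$ whose Gauss formula recovers $(g,\nabla)$.

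I expect the Codazzi identity for $S$ to be the main obstacle: projective flatness of $\overline{\nabla}$ alone determines $\overline{R}$ only up to a symmetric two-tensor $\rho$, so one must genuinely use the symmetry of $\mathrm{Ric}^\nabla$ to control the trace portion and cancel the antisymmetric correction that arises when passing from $\overline{\nabla}$ to $\nabla$ via $K$. After this step, the invocation of the fundamental theorem is standard and gives the immersion up to an equiaffine transformation of $\mathbb{R}^{m+1}$.
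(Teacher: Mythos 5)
The paper does not actually prove this theorem: it is quoted as the known realization theorem of affine differential geometry, with \cite{HM,NomizuSasaki} cited immediately before it, so there is no internal argument to compare against; your skeleton (Gauss--Codazzi--Ricci integrability equations plus the fundamental theorem of affine immersions on the simply connected $M$) is indeed the standard route. Your ``only if'' computations of $\overline{R}$ and of $\mathrm{Ric}^{\nabla}$ are correct, and the passage between the two Codazzi conditions is cleaner than your sketch suggests: since $\nabla-\overline{\nabla}=2K$ with $K$ symmetric, one has
\begin{equation*}
(\overline{\nabla}_X\rho)(Y,Z)-(\overline{\nabla}_Y\rho)(X,Z)=g\bigl((\nabla_XS)Y-(\nabla_YS)X,\,Z\bigr),\qquad \rho(X,Y):=g(SX,Y),
\end{equation*}
so the Codazzi equation for $S$ with respect to $\nabla$ is literally equivalent to the Codazzi condition for $\rho$ with respect to $\overline{\nabla}$.

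Two steps of your plan have genuine gaps. First, you identify projective flatness of $\overline{\nabla}$ with ``$\overline{R}(X,Y)Z=\rho(Y,Z)X-\rho(X,Z)Y$ with $\rho$ symmetric''. For $m\ge3$ this is the vanishing of the projective Weyl tensor, and the remaining Codazzi condition on the normalized Ricci tensor does follow from the second Bianchi identity; but the theorem includes $m=2$ (surfaces in $\mathbb{R}^3$, e.g.\ improper affine spheres), where the Weyl tensor vanishes identically and projective flatness is instead equivalent to the Codazzi (Cotton-type) condition $(\overline{\nabla}_X\gamma)(Y,Z)=(\overline{\nabla}_Y\gamma)(X,Z)$ for the normalized Ricci tensor $\gamma$. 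Hence in the ``only if'' direction you must also verify that condition (it follows from the Codazzi equation for $S$ via the identity displayed above), while in the ``if'' direction your plan to extract Codazzi for $S$ ``from the second Bianchi identity'' fails precisely when $m=2$; there the Codazzi condition on $\gamma$ must be taken as the content of projective flatness itself. Second, projective flatness alone does not supply a symmetric $\rho$, nor even the pure curvature form: writing $\overline{\nabla}_XY=D_XY+\omega(X)Y+\omega(Y)X$ with $D$ flat, one finds $\overline{R}(X,Y)Z=\gamma(Y,Z)X-\gamma(X,Z)Y+d\omega(X,Y)Z$, and $\gamma$ is symmetric only when $d\omega=0$. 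The hypothesis that $\mathrm{Ric}^{\nabla}$ is symmetric---equivalently that $\mathrm{Ric}^{\overline{\nabla}}$ is, since \eqref{intcurvature} gives $\operatorname{tr}\overline{R}(X,Y)=-\operatorname{tr}R(X,Y)$---is exactly what removes this extra term and makes $S$ $g$-self-adjoint (the affine Ricci equation), and it also encodes the equiaffinity of the realized immersion; it is not merely a device for cancelling antisymmetric terms at the Codazzi-for-$S$ stage. With these repairs (and the remark that positive definiteness of $g$, after replacing $\xi$ by $-\xi$ if necessary, yields local strong convexity), your appeal to the fundamental theorem on the simply connected $M$ does complete the proof.
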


\subsection{Hessian manifolds}
Statistical manifolds of constant sectional curvature $0$ are called \textit{Hessian manifolds}.
Many statistical manifolds from information geometry are Hessian manifolds.\\

\begin{example}
    The Euclidean space $(\mathbb{R}^m,g_0)$ endowed with the Levi-Civita connection $\nabla^{g_0}$ is a Hessian manifold.\\
\end{example}

\begin{example}[\cite{HF2013}]
    \label{hess0}
    Let $\mathbb{R}^{+}=\{y\in\mathbb{R}\mid y>0\}$, and consider $(\mathbb{R}^{+})^m$ as a Riemannian submanifold of the Euclidean space $(\mathbb{R}^m,g_0)$.
    Define an affine connection $\nabla$ by 
    \begin{equation*}
        \nabla_{\frac{\partial}{\partial y^i}}\frac{\partial}{\partial y^j} = \frac{\delta_{ij}}{y^i}\frac{\partial}{\partial y^i}
    \end{equation*}
    where $\{y^1,\ldots,y^m\}$ is the standard coordinates on the Euclidean space restricted to $(\mathbb{R}^{+})^m$.
    Then, the triplet $((\mathbb{R}^{+})^m,g_0,\nabla)$ is a Hessian manifold.\\
\end{example}


\begin{example}[\cite{FIK}]
    \label{normdis}
    Consider the upper half plane $\mathbb{H}^2=\{(x,y)\in\mathbb{R}^2\mid y>0\}$.
    Define a Riemannian metric $g^F$ on $\mathbb{H}^2$ by
    \begin{equation*}
        g^F = \frac{dx^2+2dy^2}{y^2}
    \end{equation*}
    and a torsion-free affine connection $\nabla^{(m)}$ by
    \begin{equation*}
        \nabla^{(m)}_{\frac{\partial}{\partial x}}\frac{\partial}{\partial x} = \frac{1}{y}\frac{\partial}{\partial y},\quad \nabla^{(m)}_{\frac{\partial}{\partial x}}\frac{\partial}{\partial y} = 0,\quad \nabla^{(m)}_{\frac{\partial}{\partial y}}\frac{\partial}{\partial y} = \frac{1}{y}\frac{\partial}{\partial y}.
    \end{equation*}
    The triplet $(\mathbb{H}^2,g^F,\nabla^{(m)})$ is a Hessian manifold, called the \textit{statistical manifold of normal distributions}, where $g^F$ is called the \textit{Fisher metric}, and $\nabla^{(m)}$ the \textit{mixture connection}.
    The conjugate connection of $(\mathbb{H}^2,g^F,\nabla^{(m)})$ which we will denote by $\nabla^{(e)}$ is called the \textit{exponential connection}.
    The difference tensor $K^F$ of $(\mathbb{H}^2,g^F,\nabla^{(m)})$ is given by the following$\mathrm{:}$
    \begin{equation*}
        K^F_{\frac{\partial}{\partial x}}\frac{\partial}{\partial x} = \frac{1}{2y}\frac{\partial}{\partial y},\quad K^F_{\frac{\partial}{\partial x}}\frac{\partial}{\partial y}=\frac{1}{y}\frac{\partial}{\partial x},\quad K^F_{\frac{\partial}{\partial y}}\frac{\partial}{\partial y}=\frac{2}{y}\frac{\partial}{\partial y}.
    \end{equation*}
\end{example}

\vspace{0.5\baselineskip}

\begin{definition}
    Let $(M,g,\nabla)$ be a Hessian manifold.
    The tensor field $H\in\Gamma(TM^{(1,3)})$ defined by
    \begin{equation*}
        H(Y,Z;X) = (\nabla_X K)(Y,Z),\quad X,Y,Z\in\Gamma(TM)
    \end{equation*}
    is called the \textit{Hessian curvature tensor field}.
    Moreover, we say that $(M,g,\nabla)$ has \textit{constant Hessian curvature} $c$ if there exists a constant $c\in\mathbb{R}$ such that
    \begin{equation}
        \label{chcc}
        H(Y,Z;X) = -\frac{c}{2}(g(X,Y)Z+g(X,Z)Y)
    \end{equation}
    for any $X,Y,Z\in\Gamma(TM)$.
    We abbreviate this condition as CHC $c$.\\
\end{definition}

\begin{proposition}[\cite{shima2007geometry}]
    A Hessian manifold of CHC $c$ has Riemannian metric of constant curvature $-\frac{c}{4}$.
\end{proposition}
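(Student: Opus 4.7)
The plan is to relate $R^g$ to $K = \nabla - \nabla^g$ using the Hessian hypothesis $R^\nabla = 0$, and then to exploit the CHC condition to evaluate the resulting expression. First, since both $\nabla$ and $\nabla^g$ are torsion free, $K$ is symmetric in its two arguments, and the Codazzi equation implies that each operator $K_X$ is $g$-self-adjoint. Writing $\nabla = \nabla^g + K$ and expanding $R^\nabla$ yields the classical identity
\begin{equation*}
R^\nabla(X,Y)Z = R^g(X,Y)Z + (\nabla^g_X K)(Y,Z) - (\nabla^g_Y K)(X,Z) + K_X K_Y Z - K_Y K_X Z.
\end{equation*}

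For a Hessian manifold, $R^\nabla = 0$, and equation \eqref{intcurvature} then forces $R^{\bar\nabla} = 0$ as well, so $(M,g,\nabla)$ is conjugate symmetric; by the earlier remark, this is equivalent to $\nabla^g K$ being totally symmetric. The two $\nabla^g K$ contributions in the display above therefore cancel, leaving
\begin{equation*}
R^g(X,Y)Z = -[K_X,K_Y]Z.
\end{equation*}

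Next, I would expand $H(Y,Z;X) = (\nabla_X K)(Y,Z)$ using $\nabla_X = \nabla^g_X + K_X$ together with the symmetries of $K$. Antisymmetrizing in $X$ and $Y$ kills the $\nabla^g K$ piece (again by its total symmetry) and doubles the commutator piece, giving
\begin{equation*}
H(Y,Z;X) - H(X,Z;Y) = 2[K_X,K_Y]Z.
\end{equation*}
Combined with the previous step, this gives the identity $H(Y,Z;X) - H(X,Z;Y) = -2R^g(X,Y)Z$, valid on every Hessian manifold. Finally, plugging in the CHC $c$ formula $H(Y,Z;X) = -\tfrac{c}{2}(g(X,Y)Z + g(X,Z)Y)$, the symmetric $g(X,Y)Z$ piece cancels under the antisymmetrization, and one is left with
\begin{equation*}
R^g(X,Y)Z = -\tfrac{c}{4}\bigl(g(Y,Z)X - g(X,Z)Y\bigr),
\end{equation*}
which is the standard form of constant sectional curvature $-c/4$.

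The main obstacle is the bookkeeping in the expansion of $H$: terms such as $K(K_X Y, Z)$, $K_X K_Y Z$, and $K_Y K_X Z$ look notationally nearly identical, and one must carefully invoke both the two-argument symmetry $K_X Y = K_Y X$ and the $g$-self-adjointness of $K_X$ to see which terms cancel or combine. Once the total symmetry of $\nabla^g K$ on a Hessian manifold is in hand, however, the rest is elementary tensor algebra with no conceptual surprises.
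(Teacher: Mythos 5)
Your proof is correct and follows essentially the same route as the paper: both combine the decomposition of $R^\nabla$ into $R^g$, $\nabla^g K$ and $[K_X,K_Y]$ with the vanishing of $R^\nabla$ (hence $\overline{R}$, hence symmetry of $\nabla^g K$) to express $R^g$ through the antisymmetrized Hessian curvature, and then substitute the CHC formula. The only cosmetic difference is that you re-derive the identity $(\nabla_X K)(Y,Z)-(\nabla_Y K)(X,Z)=2[K_X,K_Y]Z$ by expanding $\nabla=\nabla^g+K$ by hand, whereas the paper simply quotes its second curvature identity \eqref{curveeq2}.
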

\begin{proof}
    On any statistical manifold $(M,g,\nabla)$, it holds that
    \begin{align}
    R(X,Y)Z &= R^g(X,Y)Z + (\nabla^g_X K)(Y,Z) - (\nabla^g_Y K)(X,Z) + [K_X,K_Y]Z, \label{curveq1}\\
    R(X,Y)Z &= R^g(X,Y)Z + (\nabla_X K)(Y,Z) - (\nabla_Y K)(X,Z) - [K_X,K_Y]Z. \label{curveeq2}
    \end{align}
    for any $X,Y,Z\in\Gamma(TM)$.
    Therefore, if $(M,g,\nabla)$ is Hessian, the following equation holds for $R^g$$\mathrm{:}$
    \begin{equation*}
        R^g(X,Y)Z = -\frac{1}{2}\left((\nabla_X K)(Y,Z) - (\nabla_Y K)(X,Z)\right),\quad X,Y,Z\in\Gamma(TM).
    \end{equation*}
    By substituting \eqref{chcc}, we obtain the desired result.\\
\end{proof}

\begin{example}
    The Euclidean space $(\mathbb{R}^m,g_0)$, equipped with its Levi-Civita connection is a Hessian manifold of CHC $0$.\\
\end{example}

\begin{example}
    Let $\overline{\nabla}$ be the conjugate connection of the Hessian manifold $((\mathbb{R}^{+})^m,g_0,\nabla)$ in Example~$\ref{hess0}$.
    The statistical manifold $((\mathbb{R}^{+})^m,g_0,\overline{\nabla})$ is a Hessian manifold of CHC $0$.
    In fact, by \cite{HF2013}, the product statistical manifold of $((\mathbb{R}^{+})^m,g_0,\overline{\nabla})$ and the Euclidean space is regarded as the maximal Hessian manifold of CHC $0$.\\
\end{example}

\begin{example}
    The statistical manifold of normal distributions is a Hessian manifold of CHC $2$.\\
\end{example}


\section{Statistical biharmonic maps}
\label{s3}
~~~We introduce the notion of statistical biharmonic maps which arise as the Euler-Lagrange equation of the statistical bi-energy functional and provide their definition.\\
\subsection{Connections on the vector bundles over statistical manifolds}
~~~We summarize the notations and definitions introduced in \cite{FU2025} in this subsection.
Let $u:(M,g,\nabla^M)\to(N,h,\nabla^N)$ be a smooth mapping between statistical manifolds. 
On the vector bundle $u^{-1}TN$, we denote by $\nabla^u$ the connection induced from $\nabla^N$, that is, $\nabla^u_X U=\nabla^N_{u_* X}U \in \Gamma(u^{-1}TN)$ for 
$X \in \Gamma(TM)$ and $U \in \Gamma(u^{-1}TN)$.
We also denote by $\overline{\nabla}^{\substack{\scalebox{0.45}{\phantom{i}}\\u}}$ and $\widehat{\nabla}^u$ the connections on $u^{-1}TN$ induced by $\overline{\nabla}^{\substack{\scalebox{0.3}{\phantom{i}}\\N}}$, $\nabla^h$, respectively.\\

\begin{definition}[\cite{FU2025}]
\label{FUdef}
Let $(M, g, \nabla^M)$ be a statistical manifold, and $E \to M$ a vector bundle with connection $\nabla^E : \Gamma(E) \to \Gamma(E \otimes T^*M)$. 
We define the operator $\Delta^E=\Delta^{(g, \nabla^M, \nabla^E)} : \Gamma(E) \to \Gamma(E)$ as 
\begin{equation*}
    \Delta^E \xi 
    =\mathrm{tr}_g \left\{ (X,Y) \mapsto \nabla^E_X\nabla^E_Y \xi -\nabla^E_{\nabla^M_X Y}\xi \right\}, 
\end{equation*}
and call it the {\sl statistical connection Laplacian}\/ with respect to $(g, \nabla^M)$ and $\nabla^E$. 
If there is a fiber metric $\langle\cdot,\cdot\rangle$ on $E$, then the statistical connection Laplacian with respect to $(g,\overline{\nabla}^{\substack{\scalebox{0.3}{\phantom{i}}\\M}})$ and $\overline{\nabla}^{\substack{\scalebox{0.3}{\phantom{i}}\\E}}$ is denoted as $\bar\Delta^E$, where $\overline{\nabla}^{\substack{\scalebox{0.3}{\phantom{i}}\\E}}$ is defined by the following equation:
\begin{equation*}
    X\langle \xi,\eta\rangle = \langle\nabla^E_X\xi,\eta\rangle + \langle\xi,\overline{\nabla}^{\substack{\scalebox{0.3}{\phantom{i}}\\E}}_X\eta\rangle,\quad \xi,\eta\in\Gamma(E).
\end{equation*}
\end{definition}

\vspace{0.5\baselineskip}

If $u:(M,g,\nabla^M)\to(N,h,\nabla^N)$ is a smooth mapping between statistical manifolds, we denote by $\Delta^u$ the statistical connection Laplacian with respect to $(g,\nabla^M)$ and $\nabla^u$.
We also denote by $\bar{\Delta}^u$ the statistical connection Laplacian associated with $(g,\overline{\nabla}^{\substack{\scalebox{0.3}{\phantom{i}}\\M}})$ and $\overline{\nabla}^{\substack{\scalebox{0.45}{\phantom{i}}\\u}}$, and by $\widehat{\Delta}^u$ that associated with $(g,\nabla^g)$ and $\widehat{\nabla}^u$.\\


For a vector bundle $E$ over $M$, the support of $\xi\in\Gamma(E)$ is the closure of $\{p\in M\,|\,\xi_p\neq0\}$.
For a domain $\Omega\subset M$, denote by $\Gamma_{\Omega}(E)$ the smooth sections of $E$ with compact support in $\Omega$.
The next proposition can be proved in the same manner as of Proposition~2.14 in \cite{FU2025}.\\


\begin{proposition}
    \label{laplaceex}
    Let $E$ be a vector bundle over a statistical manifold $(M,g,\nabla^M)$ equipped with a connection $\nabla^E$ and a fiber metric $\langle\cdot,\cdot\rangle$, and $\Omega\subset M$ a relatively compact domain.
    For any $\xi,\eta\in\Gamma(E)$ which is either $\xi\in\Gamma_{\Omega}(E)$ or $\eta\in\Gamma_{\Omega}(E)$, the following formula holds$:$
    \begin{equation*}
        \int_{\Omega} \langle\Delta^E\xi,\eta\rangle d\mu_g = \int_{\Omega} \langle\xi,\bar{\Delta}^E\eta\rangle d\mu_g + \int_{\Omega} \operatorname{div}^g(T^M)\langle\xi,\eta\rangle d\mu_g.
    \end{equation*}
\end{proposition}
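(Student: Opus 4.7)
The plan is to rewrite the pointwise difference $\langle \Delta^E \xi, \eta\rangle - \langle \xi, \bar{\Delta}^E \eta\rangle$ as the $\nabla^g$-divergence of a compactly supported vector field plus a term involving the Tchebychev field $T^M$, and then to invoke the classical divergence theorem on $\Omega$. The correction $\operatorname{div}^g(T^M)$ will arise naturally from comparing the statistical connections $\nabla^M, \overline{\nabla}^M$ with the Levi--Civita connection $\nabla^g$ through the difference tensor $K^M$.

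Fix a local $g$-orthonormal frame $\{e_i\}$ and introduce the auxiliary 1-forms
$$\omega(X) := \langle \nabla^E_X \xi, \eta\rangle, \qquad \tilde{\omega}(X) := \langle \xi, \overline{\nabla}^E_X \eta\rangle, \qquad X\in\Gamma(TM).$$
By the defining relation of $\overline{\nabla}^E$ in Definition~\ref{FUdef}, these satisfy $\omega + \tilde{\omega} = d\langle \xi, \eta\rangle$. Expanding $\sum_i (\nabla^M_{e_i}\omega)(e_i)$ by the product rule and the compatibility of the pair $(\nabla^E,\overline{\nabla}^E)$ gives
$$\sum_i (\nabla^M_{e_i}\omega)(e_i)=\langle \Delta^E \xi, \eta\rangle + \sum_i \langle \nabla^E_{e_i}\xi, \overline{\nabla}^E_{e_i}\eta\rangle,$$
and the symmetric expansion for $\tilde{\omega}$ produces
$$\sum_i (\overline{\nabla}^M_{e_i}\tilde{\omega})(e_i)=\sum_i \langle \nabla^E_{e_i}\xi, \overline{\nabla}^E_{e_i}\eta\rangle + \langle \xi, \bar{\Delta}^E \eta\rangle.$$
Subtracting the two identities kills the common cross term.

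Next, using $\nabla^M-\nabla^g=K^M$ and $\nabla^g-\overline{\nabla}^M=K^M$ together with $T^M=\mathrm{tr}_g K^M$, I verify that for any 1-form $\alpha$ with $g$-dual vector field $V_\alpha$,
$$\sum_i(\nabla^M_{e_i}\alpha)(e_i)=\operatorname{div}^g V_\alpha - \alpha(T^M),\qquad \sum_i(\overline{\nabla}^M_{e_i}\alpha)(e_i)=\operatorname{div}^g V_\alpha + \alpha(T^M).$$
Combining this with the identity $(\omega+\tilde{\omega})(T^M)=T^M\langle\xi,\eta\rangle$ from the previous step yields the pointwise formula
$$\langle \Delta^E \xi, \eta\rangle - \langle \xi, \bar{\Delta}^E \eta\rangle = \operatorname{div}^g(V_\omega - V_{\tilde{\omega}})-T^M\langle\xi,\eta\rangle.$$
Since $\xi$ or $\eta$ has compact support in $\Omega$, both $V_\omega-V_{\tilde{\omega}}$ and $\langle\xi,\eta\rangle T^M$ are compactly supported in $\Omega$; the divergence theorem integrates the first term to zero and converts $\int_\Omega T^M\langle\xi,\eta\rangle\,d\mu_g$ into $-\int_\Omega \operatorname{div}^g(T^M)\langle\xi,\eta\rangle\,d\mu_g$, which is exactly the claimed formula.

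The only delicate point is sign tracking: the $\operatorname{div}^g(T^M)$ correction survives with the stated sign precisely because the difference tensor $K^M$ enters $\nabla^M$ and $\overline{\nabla}^M$ with opposite signs, so the two $\alpha(T^M)$ contributions in the third step add rather than cancel. Apart from this bookkeeping, the argument is a direct statistical analogue of the Riemannian Green-type integration-by-parts formula, and every intermediate step is justified by the framework laid out in Definition~\ref{FUdef} and the standard divergence theorem on compactly supported fields.
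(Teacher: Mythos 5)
Your proof is correct: the two trace expansions of $\omega$ and $\tilde{\omega}$, the identities $\sum_i(\nabla^M_{e_i}\alpha)(e_i)=\operatorname{div}^g V_\alpha-\alpha(T^M)$ and $\sum_i(\overline{\nabla}^{\substack{\scalebox{0.3}{\phantom{i}}\\M}}_{e_i}\alpha)(e_i)=\operatorname{div}^g V_\alpha+\alpha(T^M)$, the resulting pointwise formula, and the final integration by parts (valid because the stated support hypotheses make $V_\omega-V_{\tilde{\omega}}$ and $\langle\xi,\eta\rangle T^M$ compactly supported in $\Omega$) all check out with the paper's sign conventions $K=\nabla^M-\nabla^g$ and $\overline{\nabla}^{\substack{\scalebox{0.3}{\phantom{i}}\\M}}=\nabla^g-K$. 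The paper gives no proof here, deferring to Proposition~2.14 of \cite{FU2025}, and your divergence-theorem argument is exactly the standard integration-by-parts computation that reference relies on, so your route coincides with the intended one.
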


\vspace{1.5pt}

\subsection{Statistical biharmonic maps}
\label{sbm}
~~~A class of mappings between statistical manifolds; statistical biharmonic maps were introduced in \cite{FU2025}.
Let $(M,g,\nabla^M)$ and $(N,h,\nabla^N)$ be statistical manifolds.
For a smooth mapping $u:M\to N$, the tension field $\tau(u)=\tau^{(g,\nabla^M,\nabla^N)}$ of $u$ is defined by 
\begin{equation*}
\tau(u) = \mathrm{tr}_g\{(X,Y)\mapsto\nabla^u_X u_*Y-u_*\nabla^M_X Y\} 
        \in \Gamma(u^{-1}TN).
\end{equation*}
Here, notice that the definition of $\tau(u)$ is independent of $h$, therefore the equation $\tau(u)=0$ is only a characterization of mappings between $(M,g,\nabla^M)$ and $(N,\nabla^N)$.\\


\begin{example}
    Let $\nabla$ be an affine connection on $M$.
    The tension field of a curve $\gamma:(I,g_0,\nabla^{g_0})\to (M,\nabla)$ is $\tau(\gamma)=\nabla_{\dot{\gamma}}\dot{\gamma}$, where $g_0$ is the Euclidean metric restricted to $I\subset\mathbb{R}$.\\
\end{example}

\begin{example}[\cite{FU2025}]
    \label{affimm}
    Let $\{f,\xi\}$ be a locally strongly convex equiaffine immersion, where $f:M\to\mathbb{R}^{m+1}$.
    Denote by $D$ the standard flat affine connection of $\mathbb{R}^{m+1}$.
    A statistical structure $(g,\nabla)$ on $M$ is induced by the Gauss formula$:$
    \begin{equation*}
        D_Xf_*Y=f_*\nabla_XY+g(X,Y)\xi,\quad X,Y\in\Gamma(TM).
    \end{equation*}
    The tension field of $f:(M,g,\nabla)\to(\mathbb{R}^{m+1},D)$ is $\tau(f)=m\cdot\xi$.\\
\end{example}


We define a functional for mappings between statistical manifolds using the tension field.\\

\begin{definition}
    Let $u:(M,g,\nabla^M)\to (N,h,\nabla^N)$ be a smooth mapping and $\Omega\subset M$ a relatively compact domain.
    The statistical bi-energy $E_{2,\Omega}(u)=E_{2,\Omega}^{(g,\nabla^M,h,\nabla^N)}(u)$ of $u$ is defined by
    \begin{equation*}
        E_{2,\Omega}(u) = \int_{\Omega} \|\tau(u)\|_h^2\,d\mu_g,
    \end{equation*}
    where $d\mu_g$ is the measure derived from the Riemannian metric $g$.
    The statistical bi-tension field $\tau_2(u)=\tau_2^{(g,\nabla^M, h,\nabla^N)}(u)$ of $u$ is defined by
\begin{eqnarray*}
\tau_2(u)
&=&        
\bar{\Delta}^u\tau(u)+\operatorname{div}^g
(T^M)\tau(u) \nonumber\\
&&\quad 
-\sum_{i=1}^mL^N(u_*e_i,\tau(u)) u_*e_i -K^N(\tau(u),\tau(u) )
\quad 
\in \Gamma(u^{-1}TN),      
\end{eqnarray*}
where $\{e_1,\ldots,e_m\}$ is an orthonormal frame with respect to $g$.\\
\end{definition}

The following theorem can be proved in the same manner as Theorem 3.1 in \cite{FU2025}.\\

\begin{theorem}
    \label{thm1}
Let $u:(M,g,\nabla^M)\to(N,h,\nabla^N)$ be a mapping between statistical manifolds.
For any relatively compact domain $\Omega\subset M$ and a variation $\{u_t\}$ with $V=\left.\frac{\partial}{\partial t}\right|_{t=0}u_t\in\Gamma_{\Omega}(u^{-1}TN)$,
the first variational formula of the statistical bi-energy is given by
\begin{equation*}
  \left.\displaystyle\frac{d}{dt}\right|_{t=0}E_{2,\Omega}(u_t)
    =\int_{\Omega}\left\langle V,\tau_2(u)\right\rangle d\mu_g.
\end{equation*}
\end{theorem}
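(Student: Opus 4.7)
The plan is to adapt the proof of Theorem~3.1 of \cite{FU2025}, which treats compact $M$, to the present non-compact setting. The crucial observation that makes this adaptation essentially formal is that $V$ has compact support in the relatively compact domain $\Omega$: this guarantees that the integrand of $E_{2,\Omega}(u_t)$ and all of its time-derivatives vanish near $\partial\Omega$, so differentiating under the integral sign is justified and every integration by parts on $\Omega$ has no boundary term.

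I would set up a variation $F\colon M\times(-\epsilon,\epsilon)\to N$ with $u_t=F(\cdot,t)$ and denote by $\nabla^F$ and $\overline{\nabla}^F$ the pull-back connections on $F^{-1}TN$ induced from $\nabla^N$ and $\overline{\nabla}^N$. Differentiating at $t=0$ and using the conjugate-connection identity for $h$, I obtain
\begin{equation*}
\left.\frac{d}{dt}\right|_{t=0}\!E_{2,\Omega}(u_t)
= \int_{\Omega}\!\bigl\{\bigl\langle \nabla^F_{\partial_t}\tau(u_t)|_{t=0},\tau(u)\bigr\rangle + \bigl\langle \tau(u),\overline{\nabla}^F_{\partial_t}\tau(u_t)|_{t=0}\bigr\rangle\bigr\}\,d\mu_g.
\end{equation*}
Next I would compute $\nabla^F_{\partial_t}\tau(u_t)|_{t=0}$ by expanding the trace defining $\tau$, using $[\partial_t,e_i]=0$ to commute $\partial_t$ past the orthonormal-frame derivatives, and absorbing the resulting non-commutativity through the curvature identity $\nabla^F_{\partial_t}\nabla^F_{e_i}U - \nabla^F_{e_i}\nabla^F_{\partial_t}U = R^N(V,u_*e_i)U$. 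This yields a term of the form $\Delta^u V$ together with curvature corrections and difference-tensor corrections arising from $K^N$; the matching computation for $\overline{\nabla}^F_{\partial_t}\tau(u_t)|_{t=0}$ is performed analogously using $\overline{R}^N$ and $K^N$.

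Having obtained expressions linear in $V$ and its $\nabla^u$-derivatives, I would apply Proposition~\ref{laplaceex} to move both Laplacians off $V$ and onto $\tau(u)$: this converts the $\Delta^u V$ piece paired with $\tau(u)$ into $V$ paired with $\bar{\Delta}^u\tau(u)$, and simultaneously produces the divergence correction $\operatorname{div}^g(T^M)\tau(u)$ that appears in $\tau_2(u)$. The curvature contributions are then symmetrized via the defining identity of $L^N$, which converts a pairing against $R^N(V,u_*e_i)u_*e_i$ into the $L^N(u_*e_i,\tau(u))u_*e_i$ term of $\tau_2(u)$, while the systematic mismatch between $\nabla^N$ and $\overline{\nabla}^N$ assembles into the $K^N(\tau(u),\tau(u))$ term. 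The main obstacle is purely bookkeeping: at each step one must track which of the three connections $\nabla^N$, $\overline{\nabla}^N$, $\nabla^h$ (and their pull-backs) is being used, and deploy the curvature identities \eqref{intcurvature}, \eqref{swap} and \eqref{diffcurv} in precisely the right places to recognize the final collection of terms as $\tau_2(u)$.
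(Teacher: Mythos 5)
Your plan is, in substance, the paper's own proof: the paper only says the theorem ``can be proved in the same manner as Theorem~3.1 in \cite{FU2025}'', and the steps you list --- differentiation under the integral justified by $V\in\Gamma_{\Omega}(u^{-1}TN)$, the commutation identity $\nabla^F_{\partial_t}\tau(u_t)\big|_{t=0}=\Delta^u V+\sum_i R^N(V,u_*e_i)u_*e_i$, integration by parts via Proposition~\ref{laplaceex} (which is what produces the $\operatorname{div}^g(T^M)$ term), and the conversion of the curvature pairing through the defining identity of $L^N$ --- are exactly that computation, reproduced almost verbatim inside the paper's proof of Theorem~\ref{main}. So the approach is correct and essentially identical.

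Two places need tightening when you carry it out. First, ``moving both Laplacians off $V$'' is not quite what happens on the conjugate side: if you expand $\overline{\nabla}^F_{\partial_t}\tau(u_t)$ directly from the trace defining $\tau$, the second-order term is the mixed Laplacian $\Delta^{(g,\nabla^M,\overline{\nabla}^u)}V$, and Proposition~\ref{laplaceex} (applied with $\nabla^E=\overline{\nabla}^u$) converts it into $\Delta^{(g,\overline{\nabla}^M,\nabla^u)}\tau(u)$, which is \emph{not} the $\bar\Delta^u\tau(u)$ appearing in $\tau_2(u)$; additional $\nabla^N=\overline{\nabla}^N+2K^N$ conversions are then needed before the terms assemble. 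It is cleaner to avoid differentiating $\tau$ with the conjugate connection at all, using $\overline{\nabla}^F_{\partial_t}\tau(u_t)=\nabla^F_{\partial_t}\tau(u_t)-2K^N_{F_*\partial_t}\tau(u_t)$, so that Proposition~\ref{laplaceex} is applied only once and the $K^N(\tau(u),\tau(u))$ term falls out from the total symmetry of $h(K^N(\cdot,\cdot),\cdot)$. Second, watch the constant: with the paper's normalization $E_{2,\Omega}(u)=\int_{\Omega}\|\tau(u)\|_h^2\,d\mu_g$ (no factor $\tfrac12$), the computation you outline yields $\left.\frac{d}{dt}\right|_{t=0}E_{2,\Omega}(u_t)=2\int_{\Omega}\langle V,\tau_2(u)\rangle\,d\mu_g$ --- already visible for curves in Euclidean space, where $\frac{d}{ds}\int|\gamma_s''|^2\,dt=2\int\langle V,\gamma''''\rangle\,dt$ --- so the stated formula corresponds to the customary factor $\tfrac12$ in the bi-energy; do not silently absorb this factor to match the statement, but record it (it does not affect the Euler--Lagrange equation $\tau_2(u)=0$ or any stability conclusion).
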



\vspace{1em}

\begin{definition}
    A mapping $u:(M,g,\nabla^M)\to(N,h,\nabla^N)$ that satisfies $\tau_2(u)=0$ is called a \textit{statistical biharmonic map}.\\
\end{definition}

We now present some examples of statistical biharmonic maps.\\

\begin{example}
    Every mapping $u:(M,g,\nabla^M)\to(N,h,\nabla^N)$ that satisfies $\tau(u)=0$ is a statistical biharmonic map.\\
\end{example}

\begin{example}[\cite{FU2025}]
    \label{impropaff}
    Consider the locally strongly convex equiaffine immersion $\{f,\xi\}$ in Example~\ref{affimm}.
    Equip the space $(\mathbb{R}^{m+1},D)$ with the Euclidean metric $g_0$, that is, a Riemannian metric which satisfies $Dg_0=0$.
    Then, the statistical bi-tension field of the map $u:(M^m,g,\nabla)\to(\mathbb{R}^{m+1},g_0,D)$ is given by
    \begin{equation*}
        \frac{1}{m}\tau_2(u) = -f_*(\mathrm{tr}_g\nabla S) - \mathrm{tr}S\xi - \operatorname{div}^g(T^M)\xi.
    \end{equation*}
    Where $S$ is the shape operator of $\{f,\xi\}$ defined by the Weingarten formula$:$
    \begin{equation*}
        D_X\xi = -f_*(SX),\quad X\in\Gamma(TM).
    \end{equation*}
    As a typical example, improper affine spheres satisfy $T^M=0$ and $S=0$, and hence they are statistical biharmonic maps.\\
\end{example}

\begin{example}[\cite{FU2025}]
    \label{curveex}
    Let $\gamma:((a,b),g_0,\nabla^{g_0})\to(M,g,\nabla)$ be a mapping between statistical manifolds, where $g_0$ is the Euclidean metric restricted to $(a,b)$.
    Denote by $\dot{\gamma}$ the velocity of $\gamma$.
    By Example~$\mathrm{4.1}$ of \textnormal{\cite{FU2025}}, the statistical bi-tension field $\tau_2(\gamma)$ of $\gamma$ is given by
    \begin{equation}
        \label{curvebiten}
        \tau_2(\gamma) = \overline{\nabla}_{\dot{\gamma}}\overline{\nabla}_{\dot{\gamma}}\nabla_{\dot{\gamma}}\dot{\gamma} - L(\dot{\gamma},\nabla_{\dot{\gamma}}\dot{\gamma})\dot{\gamma} - K(\nabla_{\dot{\gamma}}\dot{\gamma},\nabla_{\dot{\gamma}}\dot{\gamma}).
    \end{equation}
\end{example}

\vspace{1.5pt}

\begin{example}
    \label{curvhess0}
    Consider the Hessian manifold $((\mathbb{R}^{+})^m,g_0,\nabla)$ in Example~$\mathrm{\ref{hess0}}$.
    The curve $\gamma(t) = (\lambda_1t^2,\ldots,\lambda_mt^2),\,t\in(0,\infty)$ on $((\mathbb{R}^{+})^m,g_0,\nabla)$ satisfies equation $(\ref{curvebiten})$ for any positive constants $\lambda_1,\ldots,\lambda_m$.\\
\end{example}

We introduce new types of curves on statistical manifolds that are statistical biharmonic maps.\\

\begin{proposition}
    Let $\gamma(t),\,t\in(a,b)$ be a geodesic of $\nabla^g$ on a conjugate symmetric statistical manifold $(M,g,\nabla^M)$.
    If there exists a constant $\lambda\in\mathbb{R}$ such that $\nabla_{\dot{\gamma}}\dot{\gamma}=\lambda\cdot\dot{\gamma}$, then the map $\gamma:((a,b),g_0,\nabla^{g_0})\to(M,g,\nabla)$ is a statistical biharmonic map.\\
\end{proposition}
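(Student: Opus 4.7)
The plan is to evaluate the expression for $\tau_2(\gamma)$ given in Example~\ref{curveex} directly, using the two hypotheses (that $\gamma$ is a $\nabla^g$-geodesic and $\nabla_{\dot\gamma}\dot\gamma=\lambda\dot\gamma$) to reduce all covariant derivatives along $\gamma$ to scalar multiples of $\dot\gamma$.

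First I would convert the hypotheses into convenient form. Since $\nabla_X Y = \nabla^g_X Y + K(X,Y)$ and $\nabla^g_{\dot\gamma}\dot\gamma=0$, the assumption $\nabla_{\dot\gamma}\dot\gamma=\lambda\dot\gamma$ becomes
\begin{equation*}
K(\dot\gamma,\dot\gamma)=\lambda\dot\gamma.
\end{equation*}
Combining this with \eqref{Kconjg} immediately gives $\overline{\nabla}_{\dot\gamma}\dot\gamma = \nabla^g_{\dot\gamma}\dot\gamma - K(\dot\gamma,\dot\gamma) = -\lambda\dot\gamma$. These two identities are the workhorses.

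Next I would substitute into the three terms on the right-hand side of \eqref{curvebiten}. For the first term,
\begin{equation*}
\overline{\nabla}_{\dot\gamma}\overline{\nabla}_{\dot\gamma}\nabla_{\dot\gamma}\dot\gamma
= \lambda\,\overline{\nabla}_{\dot\gamma}\overline{\nabla}_{\dot\gamma}\dot\gamma
= \lambda\,\overline{\nabla}_{\dot\gamma}(-\lambda\dot\gamma)
= \lambda^3\dot\gamma,
\end{equation*}
and for the third,
\begin{equation*}
K(\nabla_{\dot\gamma}\dot\gamma,\nabla_{\dot\gamma}\dot\gamma)
= \lambda^2 K(\dot\gamma,\dot\gamma) = \lambda^3\dot\gamma,
\end{equation*}
so these two cancel. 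The middle term is $L(\dot\gamma,\lambda\dot\gamma)\dot\gamma = \lambda\,L(\dot\gamma,\dot\gamma)\dot\gamma$.

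The step I expect to be the crux, and where the conjugate symmetry hypothesis enters, is showing $L(\dot\gamma,\dot\gamma)\dot\gamma=0$. By definition $L$ need not be antisymmetric in its first two slots, so this does not vanish in general. However, conjugate symmetry means $R=L$ as $(1,3)$-tensors, and $R^{\nabla^M}$ is automatically antisymmetric in its first two arguments, so $L(X,X)Y = R(X,X)Y = 0$ for all $X,Y$. Applying this with $X=Y=\dot\gamma$ kills the middle term, and we conclude $\tau_2(\gamma)=0$, which establishes the proposition. The only delicate point is making sure to invoke conjugate symmetry in the form $R=L$ (rather than $R=\overline R$) so that the antisymmetry of $R$ transfers to $L$ precisely in the slots we need.
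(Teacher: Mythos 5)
Your proof is correct and follows essentially the same route as the paper: compute $K_{\dot\gamma}\dot\gamma=\lambda\dot\gamma$ and $\overline{\nabla}_{\dot\gamma}\dot\gamma=-\lambda\dot\gamma$, substitute into \eqref{curvebiten} so the first and third terms each equal $\lambda^{3}\dot\gamma$ and cancel, and kill the middle term via conjugate symmetry $L=R$ together with the antisymmetry of $R$ in its first two slots. Your remark on invoking $R=L$ (rather than $R=\overline{R}$) to transfer antisymmetry to $L$ makes explicit a step the paper leaves implicit by writing $R(\dot\gamma,\dot\gamma)\dot\gamma$ directly.
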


\begin{proof}
    Since $\nabla^g_{\dot{\gamma}}\dot{\gamma}=0$ holds, we have that
    \begin{equation*}
        \overline{\nabla}_{\dot{\gamma}}\dot{\gamma} = -\lambda\cdot\dot{\gamma},\quad K_{\dot{\gamma}}{\dot{\gamma}} = \lambda\cdot\dot{\gamma}.
    \end{equation*}
    Therefore, by equation $(\ref{curvebiten})$, the statistical bi-tension field $\tau_2(\gamma)$ of $\gamma$ is
    \begin{equation*}
            \tau_2(\gamma) = \lambda\cdot\overline{\nabla}_{\dot{\gamma}}\overline{\nabla}_{\dot{\gamma}}\dot{\gamma} - \lambda\cdot R(\dot{\gamma},\dot{\gamma})\dot{\gamma} - \lambda^2K_{\dot{\gamma}}\dot{\gamma}=0.
    \end{equation*}
\end{proof}

\begin{example}
    \label{projcurv}
    Consider the statistical manifold $(\mathbb{H}^2,g^F,\nabla^{(m)})$ of normal distributions in Example~\ref{normdis}.
    The curve $\gamma(t)=(0,e^{\lambda t}),\,t\in\mathbb{R}$ satisfies $\nabla^{(m)}_{\dot{\gamma}}\dot{\gamma}=2\lambda\dot{\gamma}$ for any constant number $\lambda\neq0$.
    It also satisfies $\nabla^{(e)}_{\dot{\gamma}}\dot{\gamma}=-2\lambda\dot{\gamma}$.
\end{example}

\section{The second variation formula of the statistical bi-energy}
~~~We now compute the second derivative of the statistical bi-energy.\\
\begin{definition}
    For a map $u:(M,g,\nabla^M)\to (N,h,\nabla^N)$ between statistical manifolds, define the operator $\mathcal{H}_u=\mathcal{H}^{(g,\nabla^M,h,\nabla^N)}_u$ by the following equation$:$
    \begin{equation*}
        \begin{split}
            \mathcal{H}_u(V) &=\overline{R}^{\substack{\scalebox{0.4}{\phantom{i}}\\N}}\left(V,\overline{\tau}(u)\right)\tau(u) + \sum_{i=1}^m\left(\overline{\nabla}^{\substack{\scalebox{0.4}{\phantom{i}}\\N}}_{u_*{e_i}}\overline{R}^{\substack{\scalebox{0.4}{\phantom{i}}\\N}}\right)\left(V,u_*{e_i}\right)\tau(u)\\
            &+2\sum_{i=1}^m\overline{R}^{\substack{\scalebox{0.4}{\phantom{i}}\\N}}\left(V,u_*{e_i}\right)\overline{\nabla}^{\substack{\scalebox{0.4}{\phantom{i}}\\u}}_{e_i}\tau(u)+\sum_{i=1}^m\left(\conjconn{N}_{V}\bar{L}^N\right)(\tau(u),u_*e_i)u_*e_i\\
            &-2\sum_{i=1}^mL^N(u_*e_i,\tau(u))\conjconn{u}_{e_i}V - \left(\conjconn{N}_{V}K^N\right)\left(\tau(u),\tau(u)\right),
        \end{split}
    \end{equation*}
    where $V\in\Gamma(u^{-1}TN)$.
    Here, $\overline{\tau}(u)\in\Gamma(u^{-1}TN)$ is the tension field of the map $u:(M,g,\overline{\nabla}^{\substack{\scalebox{0.4}{\phantom{i}}\\M}})\to(N,\overline{\nabla}^{\substack{\scalebox{0.4}{\phantom{i}}\\N}})$.\\
\end{definition}

\begin{proposition}
    \label{Joper}
    Let $u:(M,g,\nabla^M)\to (N,h,\nabla^N)$ be a map between statistical manifolds.
    If $(N,h,\nabla^N)$ is conjugate symmetric, then for any $V\in\Gamma(u^{-1}TN)$, the inner product $\langle\mathcal{H}_u(V),V\rangle$ reduces to the following$\mathrm{:}$
    \begin{equation*}
\begin{split}
\langle\mathcal{H}_u(V),V\rangle
= \Big\langle &\,R^N(V,\overline{\tau}(u))\tau(u)
+2\sum_{i=1}^m R^N(V,u_*e_i)\widehat{\nabla}^u_{e_i}\tau(u) \\
&+\sum_{i=1}^m \left(\nabla^h_{\tau(u)}R^N\right)(V,u_*e_i)u_*e_i - 2\sum_{i=1}^mR^N(u_*e_i,\tau(u))\widehat{\nabla}^u_{e_i}V \\
&-\left(\conjconn{N}_{V}K^N\right)(\tau(u),\tau(u))\,,\, V \Big\rangle.
\end{split}
\end{equation*}
\end{proposition}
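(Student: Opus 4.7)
The proof is a direct computation. The strategy is: (i) use conjugate symmetry to collapse all curvature tensors to $R^N$; (ii) convert every $\conjconn{N}$- and $\conjconn{u}$-derivative into its Levi-Civita counterpart via $\conjconn{N}=\nabla^h-K^N$ and $\conjconn{u}_X U=\widehat{\nabla}^u_X U - K^N(u_*X,U)$; (iii) apply the second Bianchi identity for $\conjconn{N}$ together with the algebraic symmetries of $R^N$ to collapse the remainder in the pairing with $V$.

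\textbf{Steps 1 and 2.} Under conjugate symmetry, the Remark after \eqref{diffcurv} yields $R^N=\overline{R}^N=L^N=\overline{L}^N$; together with \eqref{swap} and \eqref{intcurvature} this gives the Riemannian-type symmetries
\[
R^N(X,Y)=-R^N(Y,X),\qquad h(R^N(X,Y)Z,W)=-h(Z,R^N(X,Y)W),
\]
and hence the pair-swap identity $h(R^N(X,Y)Z,W)=h(R^N(Z,W)X,Y)$. Substituting into the definition of $\mathcal{H}_u(V)$ reduces every curvature symbol to $R^N$; in particular, the summands $\overline{R}^N(V,\overline{\tau}(u))\tau(u)$ and $-(\conjconn{N}_V K^N)(\tau(u),\tau(u))$ transfer directly to the right-hand side. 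Next, the identity
\[
(\conjconn{N}_X R^N)(Y,Z)W - (\nabla^h_X R^N)(Y,Z)W = -K^N_X R^N(Y,Z)W + R^N(K^N_X Y,Z)W + R^N(Y,K^N_X Z)W + R^N(Y,Z)K^N_X W,
\]
together with the analogous formula for $\conjconn{u}$ versus $\widehat{\nabla}^u$, converts the derivatives in the four remaining summands into $\widehat{\nabla}^u$- and $\nabla^h$-derivatives plus a finite collection of algebraic correction pieces involving $K^N$ and $R^N$.

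\textbf{Step 3 and main obstacle.} Because $\conjconn{N}$ is torsion-free with curvature $\overline{R}^N=R^N$, the second Bianchi identity gives
\[
(\conjconn{N}_{u_*e_i}R^N)(V,\tau(u)) + (\conjconn{N}_V R^N)(\tau(u),u_*e_i) + (\conjconn{N}_{\tau(u)}R^N)(u_*e_i,V) = 0.
\]
This rewrites $\sum_i(\conjconn{N}_V R^N)(\tau(u),u_*e_i)u_*e_i$ as a sum involving $\conjconn{N}_{\tau(u)}R^N$ and $\conjconn{N}_{u_*e_i}R^N$ (after applying first-pair antisymmetry). After reconverting to $\nabla^h$, the first piece produces the targeted term $\sum_i(\nabla^h_{\tau(u)}R^N)(V,u_*e_i)u_*e_i$, while the second cancels the analogous contribution coming from term $(b)$ of the definition in the pairing with $V$, via the identity
\[
h((\nabla^h_{u_*e_i}R^N)(V,u_*e_i)\tau(u),V) = -h((\nabla^h_{u_*e_i}R^N)(V,\tau(u))u_*e_i,V),
\]
which follows from the pair-swap symmetry of $\nabla^h R^N$ together with its first-pair antisymmetry and $h$-antisymmetry in the last two slots. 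The residual $K^N$-correction pieces pair off and vanish in $\langle\cdot,V\rangle$ by the symmetries of $R^N$ from Step~1 combined with the total symmetry $h(K^N(X,Y),Z)=h(K^N(Y,Z),X)$, which follows from torsion-freeness of $\nabla^N$ and the Codazzi equation. The main obstacle is this bookkeeping: the second Bianchi identity is available only for $\conjconn{N}$, whereas the crucial pair-swap symmetry of covariant derivatives of $R^N$ is preserved only by the metric connection $\nabla^h$; threading the two mechanisms together so that every $K^N$-correction admits a cancelling partner is the technical heart of the argument.
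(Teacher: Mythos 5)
Your strategy is essentially the paper's own: collapse all curvature tensors to $R^N$ via conjugate symmetry, invoke the second Bianchi identity for $\conjconn{N}$ (legitimate because $\overline{R}^{\,N}=R^N$), and trade $\conjconn{N},\conjconn{u}$ for $\nabla^h,\widehat{\nabla}^u$ at the cost of $K^N$-terms. However, the two places where the computation has to close are not right as written. The displayed identity
\begin{equation*}
h\bigl((\nabla^h_{u_*e_i}R^N)(V,u_*e_i)\tau(u),V\bigr)=-\,h\bigl((\nabla^h_{u_*e_i}R^N)(V,\tau(u))u_*e_i,V\bigr)
\end{equation*}
is false: since $\nabla^h h=0$, the lowered tensor $D(X,Y,Z,W)=h\bigl((\nabla^h_{u_*e_i}R^N)(X,Y)Z,W\bigr)$ inherits the skew symmetries and the pair-swap symmetry of $h(R^N(\cdot,\cdot)\cdot,\cdot)$, and exactly those symmetries give $D(V,W,T,V)=D(T,V,V,W)=-D(V,T,V,W)=+D(V,T,W,V)$, i.e.\ the identity holds with a plus sign. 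The plus sign is in fact what you need: the piece $-\sum_i(\conjconn{N}_{u_*e_i}R^N)(V,\tau(u))u_*e_i$ produced by your Bianchi rewriting enters with a minus sign, so with the correct identity its $\nabla^h$-part cancels the $\nabla^h$-part of $\sum_i(\conjconn{N}_{u_*e_i}R^N)(V,u_*e_i)\tau(u)$, whereas with the identity as you state it the two contributions would add up to twice that term and the computation would not close. So your cancellation claim is right, but the justification you give contradicts it.

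The second, more substantial gap is the final assertion that the residual $K^N$-corrections ``pair off and vanish'' in $\langle\cdot,V\rangle$. They do not vanish outright, and this is precisely the technical heart you leave undone. Carrying out the accounting (using $(\conjconn{N}_Xh)(Y,Z)=2h(K^N_XY,Z)$, the symmetry of $K^N$, and the curvature symmetries, as the paper does) one finds two surviving contributions: the pair consisting of $\sum_i(\conjconn{N}_{u_*e_i}R^N)(V,u_*e_i)\tau(u)$ and the Bianchi piece leaves $-2\sum_i h\bigl(R^N(u_*e_i,\tau(u))K^N_{u_*e_i}V,V\bigr)$, while converting $(\conjconn{N}_{\tau(u)}R^N)(V,u_*e_i)u_*e_i$ into $(\nabla^h_{\tau(u)}R^N)(V,u_*e_i)u_*e_i$ leaves $+2\sum_i h\bigl(R^N(V,u_*e_i)K^N_{u_*e_i}\tau(u),V\bigr)$. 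These leftovers are indispensable: they are exactly what promotes $\conjconn{u}_{e_i}V$ and $\conjconn{u}_{e_i}\tau(u)$ in the fifth and third summands of $\mathcal{H}_u(V)$ to the $\widehat{\nabla}^u_{e_i}V$ and $\widehat{\nabla}^u_{e_i}\tau(u)$ appearing in the stated formula. As it stands, your sketch asserts the outcome of this bookkeeping rather than deriving it, and the one explicit ingredient it supplies carries the wrong sign; both must be fixed for the argument to be a proof.
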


\begin{proof}
    Let $V\in \Gamma(u^{-1}TN)$.
    Since we have $R^N=\overline{R}^{\substack{\scalebox{0.4}{\phantom{i}}\\N}}$, it holds that
    \begin{equation*}
        \begin{split}
            \bigg\langle\left(\overline{\nabla}^{\substack{\scalebox{0.4}{\phantom{i}}\\N}}_{u_*{e_i}}\overline{R}^{\substack{\scalebox{0.4}{\phantom{i}}\\N}}\right)&\left(V,u_*{e_i}\right)\tau(u),V\bigg\rangle = \bigg\langle \left(\conjconn{N}_{u_*e_i}R^N\right)\left(V,u_*{e_i}\right)\tau(u),V\bigg\rangle\\
            &=-\bigg\langle\left(\overline{\nabla}^{\substack{\scalebox{0.4}{\phantom{i}}\\N}}_{u_*{e_i}}R^N\right)\left(\tau(u),V\right)u_*{e_i}+2R^N(u_*{e_i},\tau(u))K^N_{u_*{e_i}}V,V\bigg\rangle.
        \end{split}
    \end{equation*}
    By substituting $R^N=L^N=\overline{R}^{\substack{\scalebox{0.4}{\phantom{i}}\\N}}=\bar{L}^N$ and using the second Bianchi identity of $\overline{\nabla}^{\substack{\scalebox{0.4}{\phantom{i}}\\N}}\overline{R}^{\substack{\scalebox{0.4}{\phantom{i}}\\N}}$, the inner product reduces to 
    \begin{equation}
        \label{redu1}
        \begin{split}
            \langle\mathcal{H}_u(V),V\rangle &= \bigg\langle R^N(V,\overline{\tau}(u))\tau(u) + 2\sum_{i=1}^mR^N(V,u_*e_i)\conjconn{u}_{e_i}\tau(u)\\
            &+\sum_{i=1}^m\left(\conjconn{N}_{\tau(u)}R^N\right)(V,u_*e_i)u_*e_i - 2\sum_{i=1}^mR^N(u_*e_i,\tau(u))\widehat{\nabla}^u_{e_i}V\\
            &- \left(\overline{\nabla}^{\substack{\scalebox{0.4}{\phantom{i}}\\N}}_{V}K^N\right)(\tau(u),\tau(u)),V\bigg\rangle.\\
        \end{split}
    \end{equation}
    Lastly, since $\conjconn{N} = \nabla^h - K^N$, we obtain
    \begin{equation}
        \label{redu2}
        \begin{split}
            \bigg\langle\left(\conjconn{N}_{\tau(u)}R^N\right)&(V,u_*e_i)u_*e_i,V\bigg\rangle\\
            &=\bigg\langle\left(\nabla^h_{\tau(u)}R^N\right)(V,u_*e_i)u_*e_i+2R^N(V,u_*{e_i})K^N_{u_*{e_i}}\tau(u),V\bigg\rangle
        \end{split}
    \end{equation}
    The proof is completed by substituting equation $(\ref{redu2})$ into equation $(\ref{redu1})$.\\
\end{proof}

\begin{theorem}
    \label{main}
    Let $u:(M,g,\nabla^M)\to (N,h,\nabla^N)$ be a statistical biharmonic map, and $\Omega\subset M$ a relatively compact domain.
    A smooth variation $F:M\times(-\epsilon,\epsilon)\to N$ of $u$ is a smooth map that satisfies $u_0=u$, where $u_t(x)=F(x,t),\,(x,t)\in M\times(-\epsilon,\epsilon)$.
    For each smooth variation $F=\{u_t\}$ of $u$, which the generated variational vector field $V=\left.\frac{\partial}{\partial t}\right|_{t=0}u_t$ has compact support in $\Omega$, the following equation holds$\mathrm{:}$
    \begin{equation}
        \label{secvar}
        \begin{split}
        \left.\frac{d^2}{dt^2}\right|_{t=0} E_{2,\Omega}(u_t) =& \int_{\Omega} \biggl\|\Delta^u V + \sum^m_{i=1}R^N(V,u_*{e_i})u_*{e_i}-2K^N_{\tau(u)}V\biggr\|^2_h\,d\mu_g\\
        &+\int_{\Omega} \langle \mathcal{H}_u(V),V \rangle\,d\mu_g.
        \end{split}
    \end{equation}
\end{theorem}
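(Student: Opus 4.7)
The plan is to reduce the computation to linearizing the bi-tension field, and then extract the squared structure by a single integration by parts. Let $F:M\times(-\epsilon,\epsilon)\to N$ realize the variation, with $u_0=u$ and $V=F_*\partial_t|_{t=0}$ compactly supported in $\Omega$. Pull back $\nabla^N$ and $\conjconn{N}$ to connections $\nabla^F$ and $\conjconn{F}$ on $F^{-1}TN$; they remain dual with respect to $h$, so for any sections $X,Y$,
\begin{equation*}
\partial_t\, h(X,Y)=h(\nabla^F_{\partial_t}X,Y)+h(X,\conjconn{F}_{\partial_t}Y).
\end{equation*}
Applying Theorem~\ref{thm1} to each slice gives $\frac{d}{dt}E_{2,\Omega}(u_t)=\int_\Omega h(V_t,\tau_2(u_t))\,d\mu_g$ with $V_t:=F_*\partial_t$. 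Differentiating once more at $t=0$ and using the statistical biharmonicity $\tau_2(u_0)=0$, the theorem reduces to computing the Jacobi-type operator $\mathcal{J}(V):=\conjconn{F}_{\partial_t}\tau_2(u_t)|_{t=0}$, since
\begin{equation*}
\left.\frac{d^2}{dt^2}\right|_{t=0}E_{2,\Omega}(u_t)=\int_\Omega h(V,\mathcal{J}(V))\,d\mu_g.
\end{equation*}

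The infinitesimal building blocks follow from $[\partial_t,e_i]=0$, torsion-freeness (so $\nabla^F_{\partial_t}F_*e_i=\nabla^F_{e_i}F_*\partial_t$ and similarly for $\conjconn{F}$), and the curvature commutator. A direct computation yields
\begin{equation*}
\nabla^F_{\partial_t}\tau(u_t)\big|_{t=0}=\Delta^u V+\sum_i R^N(V,u_*e_i)u_*e_i\ =:\ A,
\end{equation*}
and, using $\conjconn{F}_{\partial_t}=\nabla^F_{\partial_t}-K^N_{F_*\partial_t}$ together with the symmetry of $K^N$, $\conjconn{F}_{\partial_t}\tau(u_t)|_{t=0}=A-K^N_{\tau(u)}V$. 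I then linearize the four summands of $\tau_2(u_t)$ separately. Commuting $\conjconn{F}_{\partial_t}$ through $\bar\Delta^{u_t}$ produces $\bar\Delta^u(A-K^N_{\tau(u)}V)$ together with $\sum_i(\conjconn{N}_{u_*e_i}\overline{R}^N)(V,u_*e_i)\tau(u)$ and $2\sum_i\overline{R}^N(V,u_*e_i)\conjconn{u}_{e_i}\tau(u)$, plus the term $\overline{R}^N(V,\overline{\tau}(u))\tau(u)$ recovered by matching $\sum_i\bigl(\conjconn{u}_{e_i}u_*e_i-u_*\conjconn{M}_{e_i}e_i\bigr)=\overline{\tau}(u)$. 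The divergence summand contributes $\operatorname{div}^g(T^M)(A-K^N_{\tau(u)}V)$. Finally, differentiating $L^N(u_*e_i,\tau(u_t))u_*e_i$ and $K^N(\tau(u_t),\tau(u_t))$ in $t$ produces the $(\conjconn{N}_V\bar{L}^N)(\tau(u),u_*e_i)u_*e_i$, $L^N(u_*e_i,\tau(u))\conjconn{u}_{e_i}V$ and $(\conjconn{N}_V K^N)(\tau(u),\tau(u))$ contributions, together with cross-terms of the form $L^N(u_*e_i,A-K^N_{\tau(u)}V)u_*e_i$ and $K^N(A-K^N_{\tau(u)}V,\tau(u))$.

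Finally, Proposition~\ref{laplaceex} converts $\int_\Omega h(V,\bar\Delta^u(A-K^N_{\tau(u)}V))\,d\mu_g$ into $\int_\Omega h(\Delta^u V,A-K^N_{\tau(u)}V)\,d\mu_g$ minus a $\operatorname{div}^g(T^M)$ term that cancels exactly against the contribution of $\operatorname{div}^g(T^M)\tau(u_t)$. Substituting $\Delta^u V=A-\sum R^N(V,u_*e_i)u_*e_i$ and expanding the resulting pairing, the $\|A\|^2$ and cross-$K^N$ terms—together with the $L^N$- and $K^N$-cross-terms mentioned above—assemble into the perfect square $\|A-2K^N_{\tau(u)}V\|_h^2$, while everything else reorganizes precisely into $\langle\mathcal{H}_u(V),V\rangle$. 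The principal obstacle is exactly this last bookkeeping: producing the coefficient $-2$ in front of $K^N_{\tau(u)}V$ and showing that the residual linear-in-$V$ contributions match the six summands defining $\mathcal{H}_u$. It relies on the symmetry of $K^N$, the swap identity \eqref{swap} relating $L^N$ and $\bar{L}^N$, the duality $\conjconn{N}=\nabla^h-K^N$, and a second-Bianchi reshuffle of the $\conjconn{N}\overline{R}^N$ contributions coming from the commutators in $\bar\Delta^{u_t}$.
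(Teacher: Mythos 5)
Your overall strategy is the same as the paper's (differentiate the first variational formula, kill one term by $\tau_2(u)=0$, linearize $\tau_2(u_t)$ by commuting $\conjconn{F}_{\partialt}$ through each summand, integrate by parts with Proposition~\ref{laplaceex}, and split the result into a perfect square plus $\langle\mathcal{H}_u(V),V\rangle$), but there is a concrete error at the step on which the whole assembly hinges. You write $\conjconn{F}_{\partialt}=\nabla^F_{\partialt}-K^N_{F_*\partialt}$ and conclude $\conjconn{F}_{\partialt}\tau(u_t)\big|_{t=0}=A-K^N_{\tau(u)}V$. The correct relation is $\conjconn{N}=\nabla^N-2K^N$ (you have confused it with $\conjconn{N}=\nabla^h-K^N$, the relation to the Levi--Civita connection), so in fact
\begin{equation*}
\conjconn{F}_{\partialt}\tau(u_t)\Big|_{t=0}=A-2K^N_{\tau(u)}V
=\Delta^u V+\sum_{i=1}^m R^N(V,u_*e_i)u_*e_i-2K^N_{\tau(u)}V .
\end{equation*}
This is not a cosmetic slip: the reason the square $\bigl\|\Delta^uV+\sum_iR^N(V,u_*e_i)u_*e_i-2K^N_{\tau(u)}V\bigr\|_h^2$ appears in \eqref{secvar} is precisely that, after integration by parts and after converting the $L^N$- and $K^N$-cross-terms using self-adjointness of $K^N_{\tau(u)}$ and the curvature-interchange definition of $L^N$, the first slot of the resulting pairing is $\Delta^uV+\sum_iR^N(V,u_*e_i)u_*e_i-2K^N_{\tau(u)}V$ while the second slot is $\conjconn{F}_{\partialt}\tau(u_t)\big|_{t=0}$, and these two expressions coincide. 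With your value the pairing would be $\langle A-2K^N_{\tau(u)}V,\,A-K^N_{\tau(u)}V\rangle$, which is not the claimed square, and the leftover linear-in-$V$ terms would not reduce to $\mathcal{H}_u(V)$.

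Beyond this, the part you yourself identify as ``the principal obstacle'' is only asserted, not carried out, and it is where the remaining content of the theorem lives. In particular, your list of curvature terms coming from commuting through $\bar\Delta^{u_t}$ omits the term $\sum_i\overline{R}^{\substack{\scalebox{0.4}{\phantom{i}}\\N}}\bigl(\conjconn{u}_{e_i}V,u_*e_i\bigr)\tau(u)$; in the paper this term is combined, via identity \eqref{diffcurv}, with the term $-L^N\bigl(\conjconn{u}_{e_i}V,\tau(u)\bigr)u_*e_i$ produced by differentiating the $L^N$ summand, and this combination is exactly what yields the coefficient $2$ in the summand $-2\sum_iL^N(u_*e_i,\tau(u))\conjconn{u}_{e_i}V$ of $\mathcal{H}_u$. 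Without performing that cancellation (and the analogous use of the swap identity \eqref{swap} and the self-adjointness of $K^N$ to route the cross-terms into the square), the claim that ``everything else reorganizes precisely into $\langle\mathcal{H}_u(V),V\rangle$'' is unsupported. Fixing the factor of $2$ in the duality relation and then actually executing this bookkeeping would bring your argument in line with the paper's proof.
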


\vspace{0.5em}

\begin{proof}
    We compute the variation following \cite{FU2025}.
    By equipping $(-\epsilon, \epsilon) \subset \mathbb{R}$ with the standard Euclidean metric and its Levi-Civita connection, we define the affine connection $\nabla^B$ on the product statistical manifold $B=M\times (-\epsilon,\epsilon)$ by the following$\mathrm{:}$
    \begin{equation}
        \label{connB}
        \nabla^B_X Y = \nabla^M_X Y,\quad \nabla^B_{\frac{\partial}{\partial t}} X = \nabla^B_X \frac{\partial}{\partial t} = \nabla^B_{\frac{\partial}{\partial t}} \frac{\partial}{\partial t} =0,
    \end{equation}
    where we distinguish $X,Y\in\Gamma(TM)$ by $(X,0),(Y,0)\in\Gamma(T(M\times (-\epsilon,\epsilon)))$, respectively.
    Thus, $T^B=T^M$ holds.
    From Theorem \ref{thm1}, we have
    \begin{equation}
        \begin{split}
            \left.\frac{d^2}{dt^2}\right|_{t=0}& E_{2,\Omega}(u_t) = \left.\frac{d}{dt}\right|_{t=0}\int_{\Omega}\left\langle F_*\frac{\partial}{\partial t},\tau_2(u_t)\right\rangle\,d\mu_g\\
            &=\int_{\Omega} \left.\left\langle\nabla^F_{\frac{\partial}{\partial t}} F_*\frac{\partial}{\partial t},\tau_2(u_t)\right\rangle\right|_{t=0}\,d\mu_g + \int_{\Omega}\left.\left\langle F_*\frac{\partial}{\partial t},\overline{\nabla}^{\substack{\scalebox{0.4}{\phantom{i}}\\F}}_{\frac{\partial}{\partial t}}\tau_2(u_t)\right\rangle\right|_{t=0}\,d\mu_g.
        \end{split}
    \end{equation}
    Here, the first item in the last row vanishes since we have $\tau_2(u)=0$.
    We proceed by computing $\displaystyle \overline{\nabla}^{\substack{\scalebox{0.4}{\phantom{i}}\\F}}_{\frac{\partial}{\partial t}}\tau_2(u_t)$ for each $t\in(-\epsilon,\epsilon)$.
    First, we have
    \begin{equation}
        \label{fireq}
        \begin{split}
            \overline{\nabla}^{\substack{\scalebox{0.4}{\phantom{i}}\\F}}_{\frac{\partial}{\partial t}}\bar{\Delta}^{u_t}\tau(u_t) =& \bar{\Delta}^{u_t}\overline{\nabla}^{\substack{\scalebox{0.4}{\phantom{i}}\\F}}_{\frac{\partial}{\partial t}}\tau(u_t) + \sum_{i=1}^m\left(\overline{\nabla}^{\substack{\scalebox{0.4}{\phantom{i}}\\N}}_{F_*{e_i}}\overline{R}^{\substack{\scalebox{0.4}{\phantom{i}}\\N}}\right)\left(F_*\frac{\partial}{\partial t},F_*{e_i}\right)\tau(u_t)\\
            &+\overline{R}^{\substack{\scalebox{0.4}{\phantom{i}}\\N}}\left(F_*\frac{\partial}{\partial t},\overline{\tau}(u_t)\right)\tau(u_t) + \sum_{i=1}^m\overline{R}^{\substack{\scalebox{0.4}{\phantom{i}}\\N}}\left(\conjconn{F}_{e_i}F_*\partialt,F_*e_i\right)\tau(u_t)\\
            &+2\sum_{i=1}^m\overline{R}^{\substack{\scalebox{0.4}{\phantom{i}}\\N}}\left(F_*\frac{\partial}{\partial t},F_*{e_i}\right)\overline{\nabla}^{\substack{\scalebox{0.4}{\phantom{i}}\\u_t}}_{e_i}\tau(u_t).
        \end{split}
    \end{equation}
    This equation holds since we have
    \begin{equation*}
        \begin{split}
            \overline{\nabla}^{\substack{\scalebox{0.4}{\phantom{i}}\\F}}_{\frac{\partial}{\partial t}}&\overline{\nabla}^{\substack{\scalebox{0.4}{\phantom{i}}\\F}}_{e_i}\overline{\nabla}^{\substack{\scalebox{0.4}{\phantom{i}}\\F}}_{e_i}\tau(u_t)=\overline{\nabla}^{\substack{\scalebox{0.4}{\phantom{i}}\\F}}_{e_i}\overline{\nabla}^{\substack{\scalebox{0.4}{\phantom{i}}\\F}}_{\frac{\partial}{\partial t}}\overline{\nabla}^{\substack{\scalebox{0.4}{\phantom{i}}\\F}}_{e_i}\tau(u_t)+\overline{R}^{\substack{\scalebox{0.4}{\phantom{i}}\\N}}\left(F_*\frac{\partial}{\partial t},F_*e_i\right)\overline{\nabla}^{\substack{\scalebox{0.4}{\phantom{i}}\\F}}_{e_i}\tau(u_t)\\
            &=\overline{\nabla}^{\substack{\scalebox{0.4}{\phantom{i}}\\F}}_{e_i}\overline{\nabla}^{\substack{\scalebox{0.4}{\phantom{i}}\\F}}_{e_i}\overline{\nabla}^{\substack{\scalebox{0.4}{\phantom{i}}\\F}}_{\frac{\partial}{\partial t}}\tau(u_t) + \overline{\nabla}^{\substack{\scalebox{0.4}{\phantom{i}}\\F}}_{e_i}\overline{R}^{\substack{\scalebox{0.4}{\phantom{i}}\\N}}\left(F_*\frac{\partial}{\partial t},F_*e_i\right)\tau(u_t)\\
            &\quad +\overline{R}^{\substack{\scalebox{0.4}{\phantom{i}}\\N}}\left(F_*\frac{\partial}{\partial t},F_*e_i\right)\overline{\nabla}^{\substack{\scalebox{0.4}{\phantom{i}}\\F}}_{e_i}\tau(u_t),
        \end{split}
    \end{equation*}
    \begin{equation*}
            \conjconn{F}_{\partialt}\conjconn{F}_{\overline{\nabla}^{\substack{\scalebox{0.1}{\phantom{i}}\\ \scalebox{0.55}{$M$}}}_{e_i}e_i}\tau(u_t) = \conjconn{F}_{\overline{\nabla}^{\substack{\scalebox{0.1}{\phantom{i}}\\ \scalebox{0.55}{$M$}}}_{e_i}e_i}\conjconn{F}_{\partialt}\tau(u_t) + \overline{R}^{\substack{\scalebox{0.4}{\phantom{i}}\\N}}\left(F_*\partialt,u_{t*}\conjconn{M}_{e_i}e_i\right)\tau(u_t),
    \end{equation*}
    and 
    \begin{equation*}
        \begin{split}
            \overline{\nabla}^{\substack{\scalebox{0.4}{\phantom{i}}\\F}}_{e_i}\overline{R}^{\substack{\scalebox{0.4}{\phantom{i}}\\N}}&\left(F_*\frac{\partial}{\partial t},F_*e_i\right)\tau(u_t) \\
            =&\left(\overline{\nabla}^{\substack{\scalebox{0.4}{\phantom{i}}\\N}}_{F_*{e_i}}\overline{R}^{\substack{\scalebox{0.4}{\phantom{i}}\\N}}\right)\left(F_*\frac{\partial}{\partial t},F_*{e_i}\right)\tau(u_t)+\overline{R}^{\substack{\scalebox{0.4}{\phantom{i}}\\N}}\left(\conjconn{F}_{e_i}F_*\partialt,F_*e_i\right)\tau(u_t)\\
            &+\overline{R}^{\substack{\scalebox{0.4}{\phantom{i}}\\N}}\left(F_*\frac{\partial}{\partial t},\conjconn{u_t}_{e_i}u_{t*}e_i\right)\tau(u_t) + \overline{R}^{\substack{\scalebox{0.4}{\phantom{i}}\\N}}\left(F_*\frac{\partial}{\partial t},F_*{e_i}\right)\overline{\nabla}^{\substack{\scalebox{0.4}{\phantom{i}}\\u_t}}_{e_i}\tau(u_t).\\
        \end{split}
        \end{equation*}
    We also have
    \begin{equation}
        \label{seceq}
           \conjconn{F}_{\partialt}\operatorname{div}^g(T^M)\tau(u_t) = \operatorname{div}^g(T^M)\conjconn{F}_{\partialt}\tau(u_t),
    \end{equation}
    \begin{equation*}
        \conjconn{F}_{\partialt}K^N_{\tau(u_t)}\tau(u_t) = \left(\conjconn{N}_{F_*\partialt}K^N\right)\left(\tau(u_t),\tau(u_t)\right) + 2K^N_{\tau(u_t)}\conjconn{F}_{\partialt}\tau(u_t).
    \end{equation*}
    Consequently, it holds that
    \begin{equation}
        \label{threq}
        \begin{split}
            &\int_{\Omega} \left\langle F_*\partialt,\conjconn{F}_{\partialt}K^N_{\tau(u_t)}\tau(u_t)\right\rangle\,d\mu_g\\
            =&\int_{\Omega} \left\langle F_*\partialt,\left(\conjconn{N}_{F_*\partialt}K^N\right)\left(\tau(u_t),\tau(u_t)\right)\right\rangle\,d\mu_g + \int_{\Omega} \left\langle2K^N_{\tau(u_t)}F_*\partialt,\conjconn{F}_{\partialt}\tau(u_t) \right\rangle\,d\mu_g.
        \end{split}
    \end{equation}
    Lastly, using $(\ref{swap})$, we obtain
    \begin{equation}
        \label{foreq}
        \begin{split}
                -\conjconn{F}_{\partialt}L^N&\left(F_*{e_i},\tau(u_t)\right)F_*e_i\\
                =&\left(\conjconn{F}_{\partialt}\bar{L}^N\right)(\tau(u_t),F_*e_i)F_*e_i-L^N\left(F_*e_i,\conjconn{F}_{\partialt}\tau(u_t)\right)F_*e_i\\
                &-L^N\left(\conjconn{F}_{\partialt}F_*e_i,\tau(u_t)\right)F_*e_i-L^N(F_*e_i,\tau(u_t))\conjconn{F}_{\partialt}F_*e_i.
        \end{split}
    \end{equation}
    By combining the first item in $(\ref{fireq})$ and $(\ref{seceq})$, the following equation holds from Proposition $\ref{laplaceex}$.
    \begin{equation}
        \label{fifeq}
        \int_{\Omega} \left\langle F_*\partialt, \bar{\Delta}^{u_t}\conjconn{F}_{\partialt}\tau(u_t) + \operatorname{div}^g(T^M)\conjconn{F}_{\partialt}\tau(u_t)\right\rangle\,d\mu_g = \int_{\Omega} \left\langle\Delta^{u_t}F_*\partialt,\conjconn{F}_{\partialt}\tau(u_t)\right\rangle\,d\mu_g
    \end{equation}
    The third item of $(\ref{foreq})$ can be rewritten as
    \begin{equation}
        \label{sixeq}
        \begin{split}
            -&\int_{\Omega}\left\langle F_*\partialt, L^N\left(F_*e_i,\conjconn{F}_{\partialt}\tau(u_t)\right)F_*e_i\right\rangle\,d\mu_g\\
            &=-\int_{\Omega}\left\langle R^N\left(F_*e_i,F_*\partialt\right)F_*e_i,\conjconn{F}_{\partialt}\tau(u_t)\right\rangle\,d\mu_g.
        \end{split}
    \end{equation}
    By adding $(\ref{fifeq})$, $(\ref{sixeq})$, and the second item of $(\ref{threq})$ multiplied by $-1$, we obtain
    \begin{equation}
        \label{seveq}
        \int_{\Omega} \left\langle \Delta^{u_t}F_*\partialt +\sum_{i=1}^mR^N\left(F_*\partialt,F_*e_i\right)F_*e_i-2K^N_{\tau(u_t)}F_*\partialt,\conjconn{F}_{\partialt}\tau(u_t) \right\rangle\,d\mu_g
    \end{equation}
    We will now compute $\displaystyle \conjconn{F}_{\partialt}\tau(u_t)$.
    Since $\left[\partialt,e_i\right]=0$ holds, we have
    \begin{equation*}
        \begin{split}
            \nabla^F_{\partialt}\tau(u_t)&=\sum_{i=1}^m\nabla^F_{\partialt}\left(\nabla^F_{e_i}F_*{e_i}-F_*(\nabla^M_{e_i}e_i)\right)\\
            &=\sum_{i=1}^m\left(\nabla^F_*{e_i}\nabla^F_{\partialt}F_*(e_i)+R^N\left(F_*\partialt,F_*e_i\right)F_*e_i-\nabla^F_{\nabla^M_{e_i}e_i}F_*\partialt\right)\\
            &=\Delta^{u_t}F_*\partialt + \sum_{i=1}^mR^N\left(F_*\partialt,F_*e_i\right)F_*e_i.
        \end{split}
    \end{equation*}
    Since $\nabla^N-\conjconn{N}=2K^N$ holds, equation $(\ref{seveq})$ becomes
    \begin{equation*}
        \int_{\Omega} \bigg\|\Delta^{u_t}F_*\partialt +\sum_{i=1}^mR^N\left(F_*\partialt,F_*e_i\right)F_*e_i-2K^N_{\tau(u_t)}F_*\partialt\bigg\|^2_h\,d\mu_g.
    \end{equation*}
    We now derive $\mathcal{H}_{u}$ using the remaining items.
    By adding the second item in $(\ref{fireq})$ and the third item in $(\ref{foreq})$, by applying $(\ref{diffcurv})$ we obtain
    \begin{equation*}
        \begin{split}
                \overline{R}^{\substack{\scalebox{0.4}{\phantom{i}}\\N}}\left(\conjconn{F}_{e_i}F_*\partialt,F_*e_i\right)&\tau(u_t)-L^N\left(\conjconn{F}_{e_i}F_*\partialt,\tau(u_t)\right)F_*e_i\\
                =-&L^N(F_*e_i,\tau(u_t))\conjconn{F}_{e_i}F_*\partialt.
        \end{split}
    \end{equation*}
    Therefore, the remaining items are given by$:$
    \begin{equation*}
        \begin{split}
            &\sum_{i=1}^m\left(\overline{\nabla}^{\substack{\scalebox{0.4}{\phantom{i}}\\N}}_{F_*{e_i}}\overline{R}^{\substack{\scalebox{0.4}{\phantom{i}}\\N}}\right)\left(F_*\frac{\partial}{\partial t},F_*{e_i}\right)\tau(u_t)+\overline{R}^{\substack{\scalebox{0.4}{\phantom{i}}\\N}}\left(F_*\frac{\partial}{\partial t},\overline{\tau}(u_t)\right)\tau(u_t)\\
            &+2\sum_{i=1}^m\overline{R}^{\substack{\scalebox{0.4}{\phantom{i}}\\N}}\left(F_*\frac{\partial}{\partial t},F_*{e_i}\right)\overline{\nabla}^{\substack{\scalebox{0.4}{\phantom{i}}\\u_t}}_{e_i}\tau(u_t)-\sum_{i=1}^m\left(\conjconn{F}_{\partialt}\bar{L}^N\right)(\tau(u_t),F_*e_i)F_*e_i\\
            &-2\sum_{i=1}^mL^N(F_*e_i,\tau(u_t))\conjconn{F}_{e_i}F_*\partialt - \left(\conjconn{N}_{F_*\partialt}K^N\right)\left(\tau(u_t),\tau(u_t)\right).
        \end{split}
    \end{equation*}
Substituting $t=0$ yields $\mathcal{H}_u(V)$.\\
\end{proof}

We define stability of statistical biharmonic maps from the second variational formula.\\

\begin{definition}
    Let $u:(M,g,\nabla^M)\to(N,h,\nabla^N)$ be a statistical biharmonic map.
    The map $u$ is said to be \textit{weakly stable} if the right-hand side of \eqref{secvar} is non-negative for any relatively compact domain $\Omega\subset M$ and $V\in\Gamma_{\Omega}(u^{-1}TN)$.
    If the right-hand side of \eqref{secvar} is positive for any relatively compact domain $\Omega\subset M$ and variation with non-zero $V\in\Gamma_{\Omega}(u^{-1}TN)$, then $u$ is said to be \textit{stable}.\\
\end{definition}

\begin{proposition}
    A statistical biharmonic map $u:(M,g,\nabla^M)\to(N,h,\nabla^N)$ is called \textit{trivial} if $\tau(u)=0$.
    Every trivial statistical biharmonic map is weakly stable.\\
\end{proposition}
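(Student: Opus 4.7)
The plan is to read the proposition straight off the second variational formula \eqref{secvar}: if $\tau(u)\equiv 0$, then both the curvature–Laplacian block inside the first integrand simplifies to a manifestly non-negative squared norm, and the second integrand $\langle \mathcal{H}_u(V),V\rangle$ vanishes identically. Since the right-hand side of \eqref{secvar} is then the integral of a non-negative function, weak stability follows by the very definition.

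Concretely, first I would substitute $\tau(u)=0$ into the first integrand
$\bigl\|\Delta^u V+\sum_i R^N(V,u_*e_i)u_*e_i-2K^N_{\tau(u)}V\bigr\|_h^2$.
The correction $2K^N_{\tau(u)}V$ drops out because $K^N$ is $C^\infty(M)$-linear in its first slot, so the integrand reduces to
$\bigl\|\Delta^u V+\sum_i R^N(V,u_*e_i)u_*e_i\bigr\|_h^2\ge 0$,
and this contributes a non-negative term to the second variation irrespective of $V$ and $\Omega$.

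Next I would go through the six terms defining $\mathcal{H}_u(V)$ and show that each one has $\tau(u)$ as a tensorial factor, so they all vanish when $\tau(u)\equiv 0$. Four terms, namely
$\overline{R}^N(V,\overline{\tau}(u))\tau(u)$,
$(\overline{\nabla}^N_{u_*e_i}\overline{R}^N)(V,u_*e_i)\tau(u)$,
$(\conjconn{N}_V\bar{L}^N)(\tau(u),u_*e_i)u_*e_i$,
$L^N(u_*e_i,\tau(u))\conjconn{u}_{e_i}V$,
and the final $(\conjconn{N}_V K^N)(\tau(u),\tau(u))$, are tensorial in $\tau(u)$ and so vanish pointwise. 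The only slightly subtle term is $\overline{R}^N(V,u_*e_i)\conjconn{u}_{e_i}\tau(u)$, which involves a covariant derivative of $\tau(u)$; but since $\tau(u)$ is identically zero as a section of $u^{-1}TN$, its covariant derivative along any direction is also zero, so this term vanishes as well. Hence $\mathcal{H}_u(V)=0$ and the second integral in \eqref{secvar} is zero.

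There is really no main obstacle beyond making sure nothing is overlooked among the six summands of $\mathcal{H}_u$; the cleanest way to present the argument is probably just to observe that $\mathcal{H}_u$ is, term by term, a tensorial expression in $\tau(u)$ and $\conjconn{u}\tau(u)$, both of which vanish. Combining the two observations yields
\[
\left.\frac{d^2}{dt^2}\right|_{t=0}E_{2,\Omega}(u_t)
= \int_{\Omega}\Big\|\Delta^u V+\sum_{i=1}^m R^N(V,u_*e_i)u_*e_i\Big\|_h^2\,d\mu_g \ \ge\ 0
\]
for every relatively compact $\Omega\subset M$ and every $V\in\Gamma_{\Omega}(u^{-1}TN)$, which is exactly the definition of weak stability.
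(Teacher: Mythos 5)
Your proof is correct and is exactly the immediate argument the paper relies on (it states the proposition without proof): with $\tau(u)\equiv 0$ every term of $\mathcal{H}_u(V)$, being tensorial in $\tau(u)$ or in $\conjconn{u}\tau(u)$, vanishes, and the remaining integrand in \eqref{secvar} is a squared norm, so the second variation is non-negative for all admissible $\Omega$ and $V$.
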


\section{Stability theorems of statistical biharmonic maps}
~~~We determine the stability of statistical biharmonic maps presented in subsection \ref{sbm}.
We also compute the second variational formula of the statistical bi-energy functional into statistical manifolds of constant sectional curvature, in particular into Hessian manifolds.

\subsection{Stability of improper affine spheres}
Let $\{f,\xi\}$ be a locally convex improper affine sphere, where $f:(M,g,\nabla)\to(\mathbb{R}^{m+1},g_0,D)$ is as in Example~\ref{impropaff}.
The statistical manifold $(M,g,\nabla)$ satisfies the equiaffine condition and $(\mathbb{R}^{m+1},g_0,D)$ is a flat Riemannian statistical manifold.
For a relatively compact domain $\Omega\subset M$, the second variational formula of the statistical bi-energy on $\Omega$ is
\begin{equation*}
    \left.\frac{d^2}{dt^2}\right|_{t=0} E_{2,\Omega}(f_t) = \int_{\Omega} \bigl\|\widehat{\Delta}^f V\bigr\|^2_{g_0}\,d\mu_g,
\end{equation*}
where $\{f_t\}_{t\in(-\epsilon,\epsilon)}$ is a variation that generates $V\in\Gamma_{\Omega}(f^{-1}T\mathbb{R}^{m+1})$ as the variational vector field.
The operator $\widehat{\Delta}^u$ is 
\begin{equation*}
    \widehat{\Delta}^fV = \sum_{i=1}^{m}\left(D^f_{e_i}D^f_{e_i}V-D^f_{\nabla^g_{e_i}e_i}V\right),
\end{equation*}
where $\{e_1,\ldots,e_m\}$ is an orthonormal frame of $g$.\\

\begin{theorem}
    Let $f:M\to\mathbb{R}^{m+1}$ be a locally convex improper affine sphere, and $\Omega\subset M$ a relatively compact domain.
    Equip $M$ with the induced statistical structure, and $(\mathbb{R}^{m+1},D)$ with the Euclidean metric $g_0$.
    Then, the mapping $f:(M,g,\nabla)\to(\mathbb{R}^{m+1},g_0,D)$ is a stable statistical biharmonic map.
\end{theorem}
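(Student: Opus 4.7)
The plan is to exploit the dramatic simplification of the second variational formula that has already been made explicit in the setup: since $(\mathbb{R}^{m+1},g_0,D)$ is Euclidean and flat we have $R^N=0$ and $K^N=0$, so every term of $\mathcal{H}_f$ vanishes and the curvature correction inside the first integral of \eqref{secvar} disappears; meanwhile the equiaffine hypothesis $T^M=0$ makes $\Delta^f$ agree with $\widehat{\Delta}^f$. The second variation is therefore just $\int_\Omega \|\widehat{\Delta}^f V\|_{g_0}^2\,d\mu_g$, which is manifestly non-negative, so weak stability is automatic and the real content is to upgrade this to strict positivity when $V\not\equiv 0$.

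Suppose the integral vanishes for some $V\in\Gamma_\Omega(f^{-1}T\mathbb{R}^{m+1})$. Continuity and pointwise non-negativity of the integrand force $\widehat{\Delta}^f V\equiv 0$ on $\Omega$. The natural next step is to pair this identity with $V$ itself and integrate. Because $\widehat{\Delta}^f$ is the statistical connection Laplacian built from the Riemannian structure $(g,\nabla^g)$ on the source, whose Tchebychev field is zero, Proposition~\ref{laplaceex} degenerates into the familiar self-adjointness of the rough Laplacian on compactly supported sections, and a standard Green-type manipulation yields
\begin{equation*}
0 = \int_{\Omega}\langle\widehat{\Delta}^f V,V\rangle\,d\mu_g = -\int_{\Omega}\|\widehat{\nabla}^f V\|_{g_0}^2\,d\mu_g,
\end{equation*}
so that $V$ is $\widehat{\nabla}^f$-parallel on $\Omega$.

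Since the target Levi-Civita connection is flat, a $\widehat{\nabla}^f$-parallel section of $f^{-1}T\mathbb{R}^{m+1}$ is exactly a constant $\mathbb{R}^{m+1}$-valued function along $f$. On the connected domain $\Omega$ the section $V$ must therefore be a single constant vector, and the hypothesis that $V$ is compactly supported strictly inside $\Omega$ then forces that constant to be zero, so $V\equiv 0$. The one spot where I expect to need care is the sign in the integration-by-parts step, specifically verifying that Definition~\ref{FUdef}'s trace-Hessian sign convention yields $\int\langle\widehat{\Delta}^f V,V\rangle = -\int\|\widehat{\nabla}^f V\|^2$ rather than the opposite sign; but this is already fixed implicitly by the sign in the first variational formula and is otherwise entirely standard.
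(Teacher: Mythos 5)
Your proposal is correct and follows essentially the same route as the paper: the same reduction of the second variation to $\int_\Omega\|\widehat{\Delta}^f V\|^2_{g_0}\,d\mu_g$ (using $R^N=0$, $K^N=0$, $T^M=0$), the same Green's-formula/integration-by-parts step showing that $\widehat{\Delta}^f V=0$ forces $D^fV=0$, and the same conclusion that a $D$-parallel, compactly supported section on the connected domain $\Omega$ must vanish. The sign issue you flag is harmless, since the paper's version runs the argument through $0=\int_\Omega\Delta_g\|V\|^2_{g_0}\,d\mu_g=\int_\Omega\|D^fV\|^2_{g_0}\,d\mu_g$, which is the same computation.
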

\begin{proof}
    It is clear that $f$ is at least weakly stable.
    Suppose there is a relatively compact domain $\Omega\subset M$ and a variation $\{f_t\}$ of $f$ with the variational vector field $V\in\Gamma_{\Omega}(f^{-1}T\mathbb{R}^{m+1})$ that satisfies $\widehat{\Delta}^fV=0$.
    By Green's formula, we obtain
    \begin{equation*}
            0 = \int_{\Omega} \Delta_g\|V\|^2_{g_0}\,d\mu_g = \int_{\Omega} \bigl\|D^fV\bigr\|^2_{g_0}\,d\mu_g.
    \end{equation*}
    Hence, we have $D^fV=0$ on $\Omega$.
    Since the support of $V$ is compact in $\Omega$, the uniqueness of parallel transport by $D$ implies that $V=0$ on $\Omega$.\\
\end{proof}

\subsection{Reduction theorem for statistical biharmonic maps into conjugate symmetric statistical manifolds}
The Chen conjecture suggests that all biharmonic maps are harmonic maps in Riemannian geometry \cite{CBY}.
Although this statement is false in general, there are many studies identifying settings in which it holds \cite{CBY2, NHS, OT}.
Consider a mapping $u:(M,g,\nabla^M)\to(N,h,\nabla^N)$ where $(N,h,\nabla^N)$ is a statistical manifold of constant sectional curvature.
Then, we have $\nabla^hR^N=0$, and from Proposition \ref{Joper}, the inner product $\langle\mathcal{H}_u(V),V\rangle$ for $V\in\Gamma(u^{-1}TN)$ reduces to the following expression$\mathrm{:}$
\begin{equation}
    \begin{split}
            \langle\mathcal{H}_u(V),V\rangle &= \bigg\langle R^N(V,\overline{\tau}(u))\tau(u) + 2\sum_{i=1}^mR^N(V,u_*e_i)\widehat{\nabla}^u_{e_i}\tau(u)\\
            &- 2\sum_{i=1}^mR^N(u_*e_i,\tau(u))\widehat{\nabla}^u_{e_i}V - \left(\overline{\nabla}^{\substack{\scalebox{0.4}{\phantom{i}}\\N}}_{V}K^N\right)(\tau(u),\tau(u)),V\bigg\rangle.\\
        \end{split}
\end{equation}

\vspace{1em}

\begin{theorem}
    Let $u:(M,g,\nabla^M)\to(N,h,\nabla^N)$ be a statistical biharmonic map, where $(N,h,\nabla^N)$ is of constant sectional curvature $\lambda$.
    Assume that there exists a relatively compact domain $\Omega\subset M$ such that $\tau(u)\in\Gamma_{\Omega}(u^{-1}TN)$, $\Delta^u \tau(u) + \sum^m_{i=1}R^N(\tau(u),u_*{e_i})u_*{e_i}-2K^N_{\tau(u)}{\tau(u)}=0$, and suppose there exists a constant $C>0$ such that
    \begin{equation}
        \label{ineq1}
        4\lambda\,\|\tau(u)\|_h^2\,h(\tau(u),\widehat{\tau}(u)) + h\!\left(\left(\conjconn{N}_{\tau(u)}K^N\right)(\tau(u),\tau(u)),\,\tau(u) \right)
\ge C\,\|\tau(u)\|_h^4,
    \end{equation}
    where $\widehat{\tau}(u)$ is the tension field of $u:(M,g,\nabla^g)\to(N,\nabla^h)$.
    If $\langle u_*X,\tau(u)\rangle=0$ holds for any $X\in\Gamma(TM)$, then $u$ is weakly stable if and only if $\tau(u)=0$.\\
\end{theorem}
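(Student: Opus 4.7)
\medskip

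\noindent\textbf{Proof proposal.} The plan is to test the second variational formula (\ref{secvar}) with the distinguished variational vector field $V=\tau(u)$, which is admissible on $\Omega$ since by hypothesis $\tau(u)\in\Gamma_{\Omega}(u^{-1}TN)$. The first term of the right-hand side of (\ref{secvar}) then vanishes identically thanks to the assumed identity
\[
\Delta^u\tau(u)+\sum_{i=1}^m R^N(\tau(u),u_*e_i)u_*e_i-2K^N_{\tau(u)}\tau(u)=0,
\]
so the whole second variation reduces to $\int_{\Omega}\langle \mathcal{H}_u(\tau(u)),\tau(u)\rangle\,d\mu_g$, and the problem becomes to estimate this integrand from above.

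Because $(N,h,\nabla^N)$ has constant sectional curvature $\lambda$, it is conjugate symmetric, and moreover $\nabla^h R^N=0$. Thus the reduced expression displayed just before the theorem applies, giving
\[
\begin{split}
\langle\mathcal{H}_u(\tau(u)),\tau(u)\rangle
=\Big\langle &R^N(\tau(u),\overline{\tau}(u))\tau(u)+2\sum_i R^N(\tau(u),u_*e_i)\widehat{\nabla}^u_{e_i}\tau(u)\\
&-2\sum_i R^N(u_*e_i,\tau(u))\widehat{\nabla}^u_{e_i}\tau(u)-\big(\conjconn{N}_{\tau(u)}K^N\big)(\tau(u),\tau(u)),\;\tau(u)\Big\rangle.
\end{split}
\]
Next I would substitute the constant-sectional-curvature formula $R^N(X,Y)Z=\lambda(h(Y,Z)X-h(X,Z)Y)$ and exploit the orthogonality hypothesis $h(u_*X,\tau(u))=0$. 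The first term collapses to zero by the skew-symmetry of $\lambda(h(\overline{\tau}(u),\tau(u))\tau(u)-\|\tau(u)\|_h^2\overline{\tau}(u))$ paired against $\tau(u)$. Differentiating $h(u_*e_i,\tau(u))=0$ along $e_i$ with the metric connection $\widehat{\nabla}^u$ gives $h(u_*e_i,\widehat{\nabla}^u_{e_i}\tau(u))=-h(\widehat{\nabla}^u_{e_i}u_*e_i,\tau(u))$; summing in $i$ and using the definition of $\widehat{\tau}(u)$ (together with $h(u_*\nabla^g_{e_i}e_i,\tau(u))=0$) yields $\sum_i h(u_*e_i,\widehat{\nabla}^u_{e_i}\tau(u))=-h(\widehat{\tau}(u),\tau(u))$. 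Plugging this in, the two $\widehat{\nabla}^u_{e_i}\tau(u)$ terms combine into $-4\lambda\|\tau(u)\|_h^2 h(\widehat{\tau}(u),\tau(u))$, and the $K^N$--term contributes the expected $-h\!\left((\conjconn{N}_{\tau(u)}K^N)(\tau(u),\tau(u)),\tau(u)\right)$. Hence
\[
\langle\mathcal{H}_u(\tau(u)),\tau(u)\rangle=-\Big[4\lambda\|\tau(u)\|_h^2 h(\widehat{\tau}(u),\tau(u))+h\!\left((\conjconn{N}_{\tau(u)}K^N)(\tau(u),\tau(u)),\tau(u)\right)\Big].
\]

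By the hypothesis (\ref{ineq1}) this integrand is bounded above by $-C\|\tau(u)\|_h^4$, so
\[
\left.\frac{d^2}{dt^2}\right|_{t=0}E_{2,\Omega}(u_t)\le -C\int_{\Omega}\|\tau(u)\|_h^4\,d\mu_g\le 0.
\]
If $u$ is weakly stable, the left-hand side is $\ge 0$, which forces $\tau(u)\equiv 0$ on $\Omega$; since $\tau(u)$ was already supported in $\overline{\Omega}$, this means $\tau(u)=0$ on $M$. The converse ``$\tau(u)=0\Rightarrow$ weakly stable'' is just the trivial-implies-stable proposition recorded above. The main obstacle is the computational step of collapsing the four curvature and $K^N$--contributions of $\langle\mathcal{H}_u(\tau(u)),\tau(u)\rangle$ into the clean expression above; one must be careful that the derivative identity for $h(u_*e_i,\tau(u))=0$ genuinely uses the Levi-Civita connection $\widehat{\nabla}^u$ (which is metric) rather than $\nabla^u$ or $\conjconn{u}$, and that the resulting sum produces the tension field $\widehat{\tau}(u)$ of the Levi-Civita pair, matching exactly the expression appearing in hypothesis (\ref{ineq1}).
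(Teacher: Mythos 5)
Your proposal is correct and follows essentially the same route as the paper: test the second variational formula with $V=\tau(u)$, kill the first (squared-norm) term via the assumed identity, use constant sectional curvature and the orthogonality $h(u_*X,\tau(u))=0$ to collapse $\langle\mathcal{H}_u(\tau(u)),\tau(u)\rangle$ to $-\bigl[4\lambda\|\tau(u)\|_h^2\,h(\widehat{\tau}(u),\tau(u))+h\bigl((\conjconn{N}_{\tau(u)}K^N)(\tau(u),\tau(u)),\tau(u)\bigr)\bigr]$, and conclude from \eqref{ineq1} and weak stability that $\tau(u)=0$, the converse being the triviality proposition. Your explicit care about differentiating $h(u_*e_i,\tau(u))=0$ with the metric connection and absorbing $u_*\nabla^g_{e_i}e_i$ to produce $\widehat{\tau}(u)$ is exactly the (implicit) step in the paper's computation.
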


The inequality $(\ref{ineq1})$ holds for any map $u:(M,g,\nabla^M)\to(N,h,\nabla^N)$ if $(N,h,\conjconn{N})$ is a Hessian manifold of negative Hessian curvature.\\

\begin{proof}
    Let $\{u_t\}$ be a variation of $u$ that generates $V=\tau(u)$ as the variational vector field.
    Then, the second variational formula of the statistical bi-energy under the variation $\{u_t\}$ is
    \begin{equation*}
        \begin{split}
            &\left.\frac{d^2}{dt^2}\right|_{t=0} E_{2,\Omega}(u_t) \\
            &= \int_{\Omega} \bigg\langle \tau(u), 4\sum_{i=1}^mR^N(\tau(u),u_*e_i)\widehat{\nabla}^u_{e_i}\tau(u) - \left(\overline{\nabla}^{\substack{\scalebox{0.4}{\phantom{i}}\\N}}_{\tau(u)}K^N\right)(\tau(u),\tau(u))\bigg\rangle\,d\mu_g.
        \end{split}
    \end{equation*}
    We have
    \begin{equation*}
        \begin{split}
        \sum_{i=1}^m\bigg\langle\tau(u),R^N(\tau(u),u_*e_i)\widehat{\nabla}^u_{e_i}\tau(u)\bigg\rangle &= \lambda\sum_{i=1}^mh(\widehat{\nabla}^u_{e_i}\tau(u),u_*e_i)\|\tau(u)\|_h^2.\\
        &=-\lambda\sum_{i=1}^mh(\tau(u),\widehat{\nabla}^u_{e_i}u_*e_i)\|\tau(u)\|_h^2\\
        &=-\lambda h(\tau(u),\widehat{\tau}(u))\|\tau(u)\|_h^2,
        \end{split}
    \end{equation*}
    thus, 
    \begin{equation*}
            0\leq\left.\frac{d^2}{dt^2}\right|_{t=0} E_{2,\Omega}(u_t)\leq-C\int_{\Omega} \|\tau(u)\|_h^4\,d\mu_g\leq0.
    \end{equation*}
    Hence, we obtain $\tau(u)=0$ on $M$.
\end{proof}

\subsection{Stability of statistical biharmonic maps into Hessian manifolds}
Since $R^N=\overline{R}^{\substack{\scalebox{0.4}{\phantom{i}}\\N}}=0$ holds for any Hessian manifold $(N,h,\nabla^N)$, the following proposition holds immediately.\\

\begin{proposition}
    Let $u:(M,g,\nabla^M)\to(N,h,\nabla^N)$ be a statistical biharmonic map, where $(N,h,\nabla^N)$ is a Hessian manifold.
    Then, the operator $\mathcal{H}_u$ of $u$ reduces to the following$\mathrm{:}$
    \begin{equation*}
        \mathcal{H}_u(V) = -\left(\conjconn{N}_{V}K^N\right)\left(\tau(u),\tau(u)\right),\quad V\in\Gamma(u^{-1}TN).
    \end{equation*}
\end{proposition}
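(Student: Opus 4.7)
The plan is to substitute the vanishing of the curvature tensors on a Hessian manifold directly into the defining formula of $\mathcal{H}_u$ and verify that only the $K^N$-term survives. By definition, a Hessian manifold has constant sectional curvature $0$, so $R^N \equiv 0$ as a tensor field on $N$. Equation \eqref{intcurvature} then gives $g(\overline{R}^{\substack{\scalebox{0.3}{\phantom{i}}\\N}}(X,Y)Z,W) = -g(Z,R^N(X,Y)W) = 0$ for all $X,Y,Z,W$, so $\overline{R}^{\substack{\scalebox{0.3}{\phantom{i}}\\N}} = 0$. The defining relation $g(L^N(Z,W)X,Y) = g(R^N(X,Y)Z,W)$ and its analogue for $\bar L^N$ then force $L^N = \bar L^N = 0$ as well.

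Next I would observe that the covariant derivative of an identically-zero tensor field is again zero, regardless of which affine connection is applied; in particular $\conjconn{N} \overline{R}^{\substack{\scalebox{0.3}{\phantom{i}}\\N}} = 0$ and $\conjconn{N} \bar L^N = 0$. Inspecting the six summands that constitute $\mathcal{H}_u(V)$, the first five are each built from $\overline{R}^{\substack{\scalebox{0.3}{\phantom{i}}\\N}}$, $L^N$, $\bar L^N$, or $\conjconn{N}$ applied to one of these, and therefore vanish pointwise. The only surviving summand is the sixth, $-(\conjconn{N}_V K^N)(\tau(u),\tau(u))$, which coincides with the claimed expression for $\mathcal{H}_u(V)$.

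There is no real obstacle here: the proposition is a routine substitution once flatness of $\nabla^N$ has been translated into the vanishing of $R^N$, $\overline{R}^{\substack{\scalebox{0.3}{\phantom{i}}\\N}}$, $L^N$, and $\bar L^N$. The only point worth stating explicitly is the Leibniz-rule observation that $\conjconn{N}_V \bar L^N = 0$ as a tensor, so that the fourth summand drops out even though it is phrased as a covariant derivative rather than as $\bar L^N$ itself. Note also that the statistical biharmonic hypothesis on $u$ is not used in the simplification; it only fixes the setting in which the operator $\mathcal{H}_u$ is of interest via Theorem \ref{main}.
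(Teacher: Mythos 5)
Your proof is correct and follows essentially the same route as the paper, which deduces the proposition immediately from the flatness of a Hessian manifold: $R^N=\overline{R}^{\substack{\scalebox{0.4}{\phantom{i}}\\N}}=0$, hence $L^N=\bar{L}^N=0$ and all covariant derivatives of these vanishing tensors are zero, so only the term $-\left(\conjconn{N}_{V}K^N\right)(\tau(u),\tau(u))$ survives. Your added remarks (the Leibniz observation and that biharmonicity is not used) are accurate but do not change the argument.
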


\vspace{1em}

We now obtain two corollaries related to Hessian manifolds of constant Hessian curvature.\\

\begin{cor}
    \label{cor1}
    Suppose $(N,h,\conjconn{N})$ is a Hessian manifold with Hessian curvature $\overline{H}$.
    Then, for a map $u:(M,g,\nabla^M)\to(N,h,\nabla^N)$, the operator $\mathcal{H}_u$ satisfies\\
    \begin{equation*}
        \mathcal{H}_u(V) = \overline{H}(\tau(u),\tau(u);V),\quad V\in\Gamma(u^{-1}TN).
    \end{equation*}
    In particular, if $(N,h,\conjconn{N})$ is CHC $c$, then $\langle\mathcal{H}_u(V),V\rangle = -c\cdot h(\tau(u),V)^2,$ $V\in\Gamma(u^{-1}TN)$ holds.
\end{cor}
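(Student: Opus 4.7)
The plan is to chain the preceding proposition with the definition of the Hessian curvature applied to the conjugate statistical manifold $(N,h,\conjconn{N})$; the whole argument then reduces to a short tensor identification, and no serious obstacle is expected.

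First, I would verify that the preceding proposition is applicable. The hypothesis of the corollary is that $(N,h,\conjconn{N})$ is Hessian, i.e.\ $\overline{R}^{\substack{\scalebox{0.4}{\phantom{i}}\\N}}=0$. By (\ref{intcurvature}) this is equivalent to $R^N=0$, so $(N,h,\nabla^N)$ is equally Hessian, and the preceding proposition gives $\mathcal{H}_u(V) = -(\conjconn{N}_V K^N)(\tau(u),\tau(u))$ for all $V\in\Gamma(u^{-1}TN)$.

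Next, I would identify the right-hand side with $\overline{H}(\tau(u),\tau(u);V)$. By (\ref{Kconjg}), the difference tensor of the conjugate statistical manifold is $\bar{K}^N := \conjconn{N}-\nabla^h = -K^N$. Since $\bar{K}^N$ is a $(1,2)$-tensor and $\conjconn{N}$ acts on tensor fields by Leibniz, the Hessian curvature of $(N,h,\conjconn{N})$ satisfies $\overline{H}(Y,Z;X) = (\conjconn{N}_X \bar{K}^N)(Y,Z) = -(\conjconn{N}_X K^N)(Y,Z)$. Substituting $X=V$ and $Y=Z=\tau(u)$ yields the first claimed identity $\mathcal{H}_u(V) = \overline{H}(\tau(u),\tau(u);V)$.

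For the CHC $c$ specialisation, I would apply (\ref{chcc}) with $\overline{H}$ in place of $H$, obtaining $\mathcal{H}_u(V) = -\tfrac{c}{2}\bigl(h(V,\tau(u))\tau(u)+h(V,\tau(u))\tau(u)\bigr) = -c\,h(V,\tau(u))\tau(u)$, and then pair with $V$ to conclude $\langle\mathcal{H}_u(V),V\rangle = -c\,h(\tau(u),V)^2$. The only points requiring care are the sign when switching from $K^N$ to $\bar{K}^N$ and the equivalence of the two ``Hessian'' formulations through (\ref{intcurvature}); neither is a substantive obstacle.
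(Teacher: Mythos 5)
Your proposal is correct and follows essentially the route the paper intends (the paper treats the corollary as immediate): apply the preceding proposition, note via \eqref{intcurvature} that $\overline{R}^{\substack{\scalebox{0.4}{\phantom{i}}\\N}}=0$ iff $R^N=0$, identify $\bar K^N=-K^N$ from \eqref{Kconjg} so that $\overline{H}(Y,Z;X)=-(\conjconn{N}_XK^N)(Y,Z)$, and then substitute \eqref{chcc}. The only cosmetic point is that the preceding proposition is stated for statistical biharmonic maps, whereas the corollary concerns arbitrary maps; this is harmless since, as the paper notes, the reduction of $\mathcal{H}_u$ uses only $R^N=\overline{R}^{\substack{\scalebox{0.4}{\phantom{i}}\\N}}=0$ and not biharmonicity.
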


\vspace{1em}

\begin{cor}
    \label{cor2}
    If $(N,h,\nabla^N)$ is a Hessian manifold of CHC $c$, then the operator $\mathcal{H}_u$ satisfies\\
    \begin{equation*}
        \langle\mathcal{H}_u(V),V\rangle = \frac{c}{2}(\|V\|_h^2\|\tau(u)\|_h^2+h(V,\tau(u))^2)-2\|K^N_V \tau(u)\|_h^2,
    \end{equation*}
    for $V\in\Gamma(u^{-1}TN)$.
\end{cor}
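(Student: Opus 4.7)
The plan is to start from the reduced form $\mathcal{H}_u(V) = -(\conjconn{N}_V K^N)(\tau,\tau)$ supplied by the preceding proposition (where I abbreviate $\tau := \tau(u)$) and reduce everything to expressions governed by the CHC hypothesis on $\nabla^N$. Since $\conjconn{N} = \nabla^N - 2K^N$, expanding the tensor derivative and using the symmetry of $K^N$ in its first two arguments gives
$$(\conjconn{N}_V K^N)(\tau,\tau) = (\nabla^N_V K^N)(\tau,\tau) - 2K^N(V, K^N(\tau,\tau)) + 4K^N(K^N(V,\tau),\tau),$$
and the CHC hypothesis collapses the first term to $-c\,h(V,\tau)\tau$.

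I would then pair the result with $V$ under $h$. Here I would invoke the standard fact that on any statistical manifold the $(0,3)$-tensor $(A,B,C)\mapsto h(K^N(A,B),C)$ is totally symmetric (a direct consequence of the Codazzi equation combined with the symmetry of $K^N$ in its first two arguments). This yields $h(K^N(K^N(V,\tau),\tau), V) = \|K^N_V\tau\|_h^2$ and $h(K^N(V, K^N(\tau,\tau)), V) = h(K^N_V V, K^N_\tau \tau)$, so
$$\langle \mathcal{H}_u(V), V\rangle = c\,h(V,\tau)^2 + 2\,h(K^N_V V, K^N_\tau \tau) - 4\,\|K^N_V\tau\|_h^2.$$

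The main obstacle is matching the cross term $2\,h(K^N_V V, K^N_\tau\tau)$ to the $\tfrac{c}{2}\|V\|_h^2\|\tau\|_h^2$ piece of the target formula. For this I would establish the auxiliary commutator identity
$$[K^N_X, K^N_Y]Z = \frac{c}{4}\bigl(h(Y,Z)X - h(X,Z)Y\bigr),$$
derived by substituting $R^N = 0$ (since $(N,h,\nabla^N)$ is Hessian) into equation \eqref{curveeq2}, inserting the CHC expressions for $(\nabla^N_X K^N)$ and $(\nabla^N_Y K^N)$, and invoking the Proposition in \S 2 stating that a CHC $c$ Hessian manifold has Riemannian sectional curvature $-c/4$. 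Pairing this identity with $(X,Y,Z) = (V,\tau,\tau)$ and taking the $h$-inner product with $V$ (again using total symmetry of $K^N$) produces
$$h(K^N_V V, K^N_\tau\tau) - \|K^N_V\tau\|_h^2 = \frac{c}{4}\bigl(\|V\|_h^2\|\tau\|_h^2 - h(V,\tau)^2\bigr).$$
Inserting this into the displayed expression for $\langle\mathcal{H}_u(V), V\rangle$ and collecting terms delivers exactly the claimed formula. I expect the only delicate step to be tracking signs in the $\conjconn{N}$-to-$\nabla^N$ conversion and in the derivation of the commutator identity; everything else follows from the identities already assembled in \S 2.
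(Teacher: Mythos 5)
Your argument is correct; I checked each step. The expansion $(\overline{\nabla}^{N}_V K^N)(\tau,\tau) = (\nabla^N_V K^N)(\tau,\tau) - 2K^N_VK^N_\tau\tau + 4K^N_\tau K^N_V\tau$ with $\tau=\tau(u)$, the use of total symmetry of the cubic form $h(K^N(\cdot,\cdot),\cdot)$, and the commutator identity $[K^N_X,K^N_Y]Z = \tfrac{c}{4}\bigl(h(Y,Z)X - h(X,Z)Y\bigr)$ (which does follow from \eqref{curveeq2} with $R^N=0$, the CHC expression \eqref{chcc}, and the proposition that $h$ then has constant curvature $-\tfrac{c}{4}$) all hold, and the final assembly reproduces the stated formula. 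Your route differs from the paper's in how the commutator is handled: the paper first uses the Hessian-only identity $(\overline{\nabla}^{N}_X K^N)(Y,Z) = (\nabla^N_Y K^N)(X,Z) + 2K^N_ZK^N_XY$, i.e.\ it trades $-2[K^N_X,K^N_Y]Z$ for a swap of the differentiation slot (valid since $R^N=0$ forces $(\nabla^N_XK^N)(Y,Z)-(\nabla^N_YK^N)(X,Z)=2[K^N_X,K^N_Y]Z$), and then applies CHC a single time in the $\tau(u)$-slot, which produces both quadratic terms at once; you instead apply CHC in the $V$-slot and eliminate the cross term $2\,h(K^N_VV,K^N_{\tau(u)}\tau(u))$ through the explicit commutator formula, at the cost of also invoking the $-\tfrac{c}{4}$ curvature proposition. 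Both proofs rest on the same structural equations \eqref{curveq1}--\eqref{curveeq2}; the paper's version is slightly shorter and uses CHC only once, while yours isolates a reusable identity for $[K^N_X,K^N_Y]$ on CHC $c$ Hessian manifolds and makes the appearance of the $\tfrac{c}{2}\|V\|_h^2\|\tau(u)\|_h^2$ term more transparent.
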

\begin{proof}
    For any Hessian manifold $(N,h,\nabla^N)$, we have
    \begin{equation*}
        \begin{split}
            \left(\conjconn{N}_XK^N\right)(Y,Z) &= \left(\nabla^N_XK^N\right)(Y,Z) - 2[K^N_X,K^N_Y]Z + 2K^N_ZK^N_XY\\
            &=\left(\nabla^N_YK^N\right)(X,Z) + 2K^N_ZK^N_XY,
        \end{split}
    \end{equation*}
    for any $X,Y,Z\in\Gamma(TN)$.
    Hence, if the Hessian manifold $(N,h,\nabla^N)$ is CHC $c$, by equation $(\ref{chcc})$ we obtain the desired equation.\\
\end{proof}


\begin{example}
    Consider the curve $\gamma(t)=(\lambda_1t^2,\ldots,\lambda_mt^2),\,t\in(0,\infty)$ in Example~\ref{curvhess0}.
    The map $\gamma:((0,\infty),g_0,\nabla^{g_0})\to((\mathbb{R}^{+})^m,g_0,\nabla)$ is a stable statistical biharmonic map.
    Indeed, since $((\mathbb{R}^{+})^m,g_0,\overline{\nabla})$ is a Hessian manifold of CHC $0$, it follows from Corollary~\ref{cor1} that $\gamma$ is at least weakly stable.
    Let $b>a>0$ and set $I=(a,b)$.
    Take any $V\in\Gamma_{I}(\gamma^{-1}T((\mathbb{R}^{+})^m))$ and let $\{\gamma_s\}$ be a variation of $\gamma$ that generates $V$ as the variational vector field.
    The vector field $V$ can be expressed using $\phi^1,\ldots,\phi^m\in C^{\infty}(\mathbb{R})$ with as follows $\mathrm{:}$
    \begin{equation*}
        V=\sum_{i=1}^m\phi^i\frac{\partial}{\partial x^i}.
    \end{equation*}
    Each $\phi^i$ has compact support in $I$.
    With this expression, the second variational formula of $E_2$ under the variation $\gamma_s$ is
    \begin{equation}
        \label{secvar-h}
        \begin{split}
        \left.\frac{d^2}{ds^2}\right|_{s=0} E_{2,I}(\gamma_s) =& \int_{a}^{b} \biggl\|\sum_{i=1}^m\nabla_{\dot{\gamma}}\nabla_{\dot{\gamma}} \phi^i\frac{\partial}{\partial x^i} -2\sum_{i=1}^mK_{\nabla_{\dot{\gamma}}\dot{\gamma}}\phi^i\frac{\partial}{\partial x^i}\biggr\|^2_{g_0}\,dt\\
        =& \int_{a}^{b} \biggl\|\sum_{i=1}^m\left(\ddot{(\phi^i)} + \frac{4\dot{(\phi^i)}}{t} - \frac{12\phi^i}{t^2}\right)\frac{\partial}{\partial x^i}\biggr\|^2_{g_0}\,dt\\
        =& \sum_{i=1}^m\int_{a}^{b} \left(\ddot{(\phi^i)} + \frac{4\dot{(\phi^i)}}{t} - \frac{12\phi^i}{t^2}\right)^2\,dt.
        \end{split}
    \end{equation}
    By solving the linear second-order differential equation, in order for the right-hand side of equation \eqref{secvar-h} to vanish, each $\phi^i$ must be
    \begin{equation}
        \phi^i(t) = C_1^it^{\mu_+} + C_2^it^{\mu_-},
    \end{equation}
    where $C_1^i,C^i_2$ are constants and $\mu_{\pm}=\frac{1}{2}(-3\pm\sqrt{57})$.
    Since each $\phi^i$ has compact support in $I$, we obtain $C^i_1=C^i_2=0$, thus $V=0$.
    Therefore, the curve $\gamma$ is stable.\\
\end{example}

\begin{example}
    Consider the curve $\gamma(t)=(0,e^{\lambda t}),\,t\in\mathbb{R}$ on the statistical manifold of normal distributions $(\mathbb{H}^2,g^F,\nabla^{(m)})$ in Example~\ref{projcurv}.
    Define a vector field along $\gamma$ by
    \begin{equation*}
        X = e^{\lambda t}\frac{\partial}{\partial x},\quad t\in\mathbb{R}.
    \end{equation*}
    Then, the pair $\{\dot{\gamma},X\}$ is an orthogonal frame along $\gamma$ parallel with respect to $\nabla^g$, and we have $\|\dot{\gamma}\|^2_{g^F}=2\lambda^2$ and $\|X\|^2_{g^F}=1$ by definition.
    Let $I=(a,b)\in\mathbb{R}$ be bounded.
    The variational vector field $V\in\Gamma_{I}(\gamma^{-1}T\mathbb{H}^2)$ generated by a smooth variation $\gamma_s$ of $\gamma$ can be expressed by $V=\phi\dot{\gamma}+ \psi X$ using $\phi,\psi\in C^{\infty}(\mathbb{R})$ which has compact support in $I$.
    Using this expression, the second variational formula of $E_2$ under the variation $\gamma_s$ is
    \begin{equation}
        \label{secvar-m}
        \begin{split}
        \left.\frac{d^2}{ds^2}\right|_{s=0} E_{2,I}(\gamma_s) =& \int_{a}^{b} \biggl\|\nabla^{(m)}_{\dot{\gamma}}\nabla^{(m)}_{\dot{\gamma}}V -2K^F_{\nabla^{(m)}_{\dot{\gamma}}\dot{\gamma}}V\biggr\|^2_{g^F}\,dt\\
        &-\int_{a}^{b}g^F\left((\nabla^{(e)}_{V}K^F)(\nabla^{(m)}_{\dot{\gamma}}\dot{\gamma},\nabla^{(m)}_{\dot{\gamma}}\dot{\gamma}),V\right)\,dt\\
        =& \int_{a}^{b} \biggl\|\ddot{\phi}\dot{\gamma} + 4\lambda\dot{\phi}\dot{\gamma} - 4\lambda^2\phi\dot{\gamma}+\ddot{\psi}X + 2\lambda\dot{\psi}X - 3\lambda^2\psi X\biggr\|^2_{g^F}\,dt\\
        -&\int_{a}^{b} \left(4\lambda^2\|V\|_{g^F}^2\|\dot{\gamma}\|_{g^F}^2+4\lambda^2g^F(V,\dot{\gamma})^2-8\lambda^2\|K^F_{\dot{\gamma}} V\|_{g^F}^2\right)\,dt.
        \end{split}
    \end{equation}
    The last row is obtained by Corollary~\ref{cor2} and that $\nabla^{(m)}_{\dot{\gamma}}\dot{\gamma} = K^F_{\dot{\gamma}}\dot{\gamma} =2\lambda\dot{\gamma}$, $\nabla^{(m)}_{\dot{\gamma}}X=K^F_{\dot{\gamma}}X=\lambda X$.
    Here, we have
    \begin{equation*}
            \|V\|_{g^F}^2\|\dot{\gamma}\|_{g^F}^2+g^F(V,\dot{\gamma})^2-2\|K^F_{\dot{\gamma}} V\|_{g^F}^2=-8\lambda^4\phi^2.
    \end{equation*}
    Since $\phi,\psi$ has compact support in $I$, \eqref{secvar-m} is equal to
    \begin{equation*}
        \begin{split}
        \int_{a}^{b}& \bigg(2\lambda^2\bigg(\ddot{\phi} + 4\lambda\dot{\phi} - 4\lambda^2\phi\bigg)^2-32\lambda^6\phi^2\bigg)\,dt+\int_{a}^{b} \bigg(\ddot{\psi} + 2\lambda\dot{\psi} - 3\lambda^2\psi\bigg)^2\,dt\\
        =&2\lambda^2\int_{a}^{b} (\ddot{\phi})^2\,dt + 48\lambda^4\int_{a}^{b}(\dot{\phi})^2\,dt + \int_{a}^{b} \bigg(\ddot{\psi} + 2\lambda\dot{\psi} - 3\lambda^2\psi\bigg)^2\,dt.
        \end{split}
    \end{equation*}
    Hence, if \eqref{secvar-m} is $0$, then $\phi=\psi=0$ must hold, thus $V=0$.
    Therefore, the statistical biharmonic map $\gamma:(\mathbb{R},g_0,\nabla^{g_0})\to(\mathbb{H}^2,g^F,\nabla^{(m)})$ is stable.

    The map $\gamma:(\mathbb{R},g_0,\nabla^{g_0})\to(\mathbb{H}^2,g^F,\nabla^{(e)})$ was also a statistical biharmonic map.
    Let $I=(a,b)\subset\mathbb{R}$ be bounded and $\gamma_s$ be a smooth variation of $\gamma$ that generates the variational vector field $V=\phi\dot{\gamma}+ \psi X$ where $\phi,\psi\in C^{\infty}(\mathbb{R})$ have compact domain in $I$.
    Since $(\mathbb{H}^2,g^F,\nabla^{(m)})$ is CHC $2$, Corollary~\ref{cor1} implies that the second variational formula under $\gamma_s$ is
    \begin{equation*}
        \begin{split}
            \left.\frac{d^2}{ds^2}\right|_{s=0} E_{2,I}(\gamma_s) =& \int_{a}^{b} \biggl\|\nabla^{(e)}_{\dot{\gamma}}\nabla^{(e)}_{\dot{\gamma}}V + 2K^F_{\nabla^{(e)}_{\dot{\gamma}}\dot{\gamma}}V\biggr\|^2_{g^F}\,dt\\
            &+\int_{a}^{b}g^F\left((\nabla^{(m)}_{V}K^F)(\nabla^{(e)}_{\dot{\gamma}}\dot{\gamma},\nabla^{(e)}_{\dot{\gamma}}\dot{\gamma}),V\right)\,dt\\
            =&2\lambda^2\int_{a}^{b} \bigg(\ddot{\phi}-4\lambda\dot{\phi}\bigg)^2\,dt + 16\lambda^4\int_{a}^{b}(\dot{\phi})^2\,dt \\
            =&2\lambda^2\int_{a}^{b} (\ddot{\phi})^2\,dt + 48\lambda^4\int_{a}^{b}(\dot{\phi})^2\,dt \\
            &+ \int_{a}^{b} \bigg(\ddot{\psi} - 2\lambda\dot{\psi} - 3\lambda^2\psi\bigg)^2\,dt.
        \end{split}
    \end{equation*}
    Thus, $V=0$ must hold in order for this integral to be non-positive, hence we conclude the map $\gamma:(\mathbb{R},g_0,\nabla^{g_0})\to(\mathbb{H}^2,g^F,\nabla^{(e)})$ is a stable statistical biharmonic map.
\end{example}

\vspace{1em}
\noindent\scalebox{1.2}{Acknowledgments}\\
\par
\vspace{-0.5em}
\noindent The author would like to express their gratitude to Professor Hajime Urakawa for encouragement and valuable discussions.\\
\par
\vspace{-0.5em}
\noindent Funding.\,
This work was supported by the Japan Science and Technology Agency (JST), the establishment of university fellowships towards the creation of science technology innovation, Grant Number JPMJSP2119.


\end{document}